\newcommand{\AC}[1]{\poly{AC}^{(#1)}}
\newcommand{\Ass}[1]{\poly{Ass}^{(#1)}}
\newcommand{\Mag}[1]{\poly{Mag}^{(#1)}}
\newcommand{\Monn}[1]{\poly{Mon}^{(#1)}}
\newcommand{\Grpp}[1]{\poly{Grp}^{(#1)}}
\newcommand{\Abb}[1]{\poly{Ab}^{(#1)}}
\newcommand{\comm}[1]{C^{(#1)}}
\newcommand{\neutr}[1]{E^{(#1)}}
\newcommand{\inv}[1]{I^{(#1)}}
\newcommand{\grp}[1]{G^{(#1)}}
\newcommand{\asso}[1]{A^{(#1)}}
\numberwithin{equation}{subsection}
\newcounter{fakecnt}[subsubsection]
\newcommand{\periodafter}[1]{\ifstrempty{#1}{}{#1.}}
\titleformat{\section}[block]{\scshape\filcenter\LARGE}{\thesection.}{.5em}{}
\titleformat{\subsection}[block]{\bfseries\filcenter\large}{\thesubsection.}{.5em}{\medskip}
\titleformat{\subsubsection}[runin]{\bfseries}{\thesubsubsection.}{.5em}{\periodafter}
\titlespacing{\subsubsection}{0pt}{\topsep}{.5em}
\newtheoremstyle{ntheorem}%
	{\topsep}{\topsep}{\itshape}{0pt}{\bfseries}{.}{.5em}%
	{\thmnumber{#2.\hspace{.5em}}\thmname{#1}\thmnote{ (#3)}}
\newtheoremstyle{ndefinition}%
	{\topsep}{\topsep}{\normalfont}{0pt}{\bfseries}{.}{.5em}%
	{\thmnumber{#2.\hspace{.5em}}\thmname{#1}\thmnote{ (#3)}}
\newtheoremstyle{nremark}%
	{\topsep}{\topsep}{\normalfont}{0pt}{\itshape}{.}{.5em}%
	{\thmnumber{}\thmname{#1}\thmnote{ (#3)}}
\theoremstyle{ntheorem}
  	\newtheorem{theorem}[subsubsection]{Theorem}
  	\newtheorem{proposition}[subsubsection]{Proposition}
	\newtheorem{lemma}[subsubsection]{Lemma}
  	\newtheorem{corollary}[subsubsection]{Corollary}
\theoremstyle{ndefinition}
	\newtheorem{remark}[subsubsection]{Remark}
\def\@equationname{equation}
\newenvironment{eqn}[1]{%
    \def\mymathenvironmenttouse{#1}%
    \ifx\mymathenvironmenttouse\@equationname%
        \refstepcounter{subsubsection}%
    \else
        \patchcmd{\@arrayparboxrestore}{equation}{subsubsection}{}{}
        \patchcmd{\print@eqnum}{equation}{subsubsection}{}{}%
        \patchcmd{\incr@eqnum}{equation}{subsubsection}{}{}%
    \fi
    \csname\mymathenvironmenttouse\endcsname%
}{%
    \ifx\mymathenvironmenttouse\@equationname%
        \tag{\thesubsubsection}%
    \fi
    \csname end\mymathenvironmenttouse\endcsname%
}
	\edef\Drop@@{%
		\dimen@=#1\relax
		\dimen@=.5\dimen@
		\A@=-\sinDirection\dimen@
		\B@=\cosDirection\dimen@
		\setboxz@h{%
			\setbox2=\hbox{\kern3\A@\raise3\B@\copy\z@}%
			\dp2=\z@ \ht2=\z@ \wd2=\z@ \box2
			\setbox2=\hbox{\kern\A@\raise\B@\copy\z@}%
			\dp2=\z@ \ht2=\z@ \wd2=\z@ \box2
			\setbox2=\hbox{\kern-\A@\raise-\B@\copy\z@}%
			\dp2=\z@ \ht2=\z@ \wd2=\z@ \box2
			\setbox2=\hbox{\kern-3\A@\raise-3\B@ \noexpand\boxz@}%
			\dp2=\z@ \ht2=\z@ \wd2=\z@ \box2
		}%
		\ht\z@=\z@ \dp\z@=\z@ \wd\z@=\z@ \noexpand\styledboxz@
	}%
\xydef@\Tttip@{\kern2pt \vrule height2pt depth2pt width\z@
	\Tttip@@ \kern2pt \egroup
	\U@c=0pt \D@c=0pt \L@c=0pt \R@c=0pt \Edge@c={\circleEdge}%
	\def\Leftness@{.5}\def\Upness@{.5}%
	\def\Drop@@{\styledboxz@}\def\Connect@@{\straight@{\dottedSpread@\jot}}}
\xydef@\Tttip@@{%
	\dimen@=.25\dimen@
 	\B@=\cosDirection\dimen@
	\setboxz@h\bgroup\reverseDirection@\line@ \wdz@=\z@ \ht\z@=\z@ \dp\z@=\z@
	{\vDirection@(1,-1)\xydashl@ \xyatipfont\char\DirectionChar}%
	{\vDirection@(1,+1)\xydashl@ \xybtipfont\char\DirectionChar}%
}
\xydef@\ar@form{
	\ifx \space@\next \expandafter\DN@\space{\xyFN@\ar@form}%
	\else\ifx ^\next \DN@ ^{\xyFN@\ar@style}\edef\arvariant@@{\string^}%
	\else\ifx _\next \DN@ _{\xyFN@\ar@style}\edef\arvariant@@{\string_}%
	\else\ifx 0\next \DN@ 0{\xyFN@\ar@style}\def\arvariant@@{0}%
	\else\ifx 1\next \DN@ 1{\xyFN@\ar@style}\def\arvariant@@{1}%
	\else\ifx 2\next \DN@ 2{\xyFN@\ar@style}\def\arvariant@@{2}%
	\else\ifx 3\next \DN@ 3{\xyFN@\ar@style}\def\arvariant@@{3}%
	\else\ifx 4\next \DN@ 4{\xyFN@\ar@style}\def\arvariant@@{4}%
	\else\ifx \bgroup\next \let\next@=\ar@style
	\else\ifx [\next \DN@[##1]{\ar@modifiers{[##1]}}
	\else\ifx *\next \DN@ *{\ar@modifiers}%
	\else\addLT@\ifx\next \let\next@=\ar@slide
	\else\ifx /\next \let\next@=\ar@curveslash
	\else\ifx (\next \let\next@=\ar@curveinout 
	\else\addRQ@\ifx\next \addRQ@\DN@{\ar@curve@}%
	\else\addLQ@\ifx\next \addLQ@\DN@{\xyFN@\ar@curve}%
	\else\addDASH@\ifx\next \addDASH@\DN@{\defarstem@-\xyFN@\ar@}%
	\else\addEQ@\ifx\next \addEQ@\DN@{\def\arvariant@@{2}\defarstem@-\xyFN@\ar@}%
	\else\addDOT@\ifx\next \addDOT@\DN@{\defarstem@.\xyFN@\ar@}%
	\else\ifx :\next \DN@:{\def\arvariant@@{2}\defarstem@.\xyFN@\ar@}%
	\else\ifx ~\next \DN@~{\defarstem@~\xyFN@\ar@}%
	\else\ifx !\next \DN@!{\dasharstem@\xyFN@\ar@}%
	\else\ifx ?\next \DN@?{\ar@upsidedown\xyFN@\ar@}%
	\else \let\next@=\ar@error
	\fi\fi\fi\fi\fi\fi\fi\fi\fi\fi\fi\fi\fi\fi\fi\fi\fi\fi\fi\fi\fi\fi\fi \next@}
\newcommand{\fl}{\rightarrow}
\newcommand{\dfl}{\Rightarrow}
\newcommand{\qfl}{\xymatrix@1@C=10pt{\ar@4 [r] &}}
\newcommand{\cl}[1]{\overline{#1}}
\newcommand{\tck}[1]{#1^{\top}}
\DeclareMathOperator{\id}{Id}
\renewcommand{\phi}{\varphi}
\renewcommand{\epsilon}{\varepsilon}
\newcommand{\Nb}{\mathbb{N}}
\newcommand{\Ar}{\mathcal{A}}
\renewcommand{\Pr}{\mathcal{P}}
\newcommand{\Rr}{\mathcal{R}}
\newcommand{\ifthen}[2]{\ifthenelse{#1}{#2}{}}
\def\cart{\times}
\newcommand{\tha}[1]{\mathbb{#1}} 
\newcommand{\freetha}[1]{{#1}^{\cart}} 
\newcommand{\ob}[1]{\textsf{#1}}
\renewcommand{\id}{id}
\newcommand{\sort}[1]{\mathsf{\underline{#1}}}
\renewcommand{\int}[1]{\llbracket #1 \rrbracket}
\newcommand{\poly}[1]{\mathsf{#1}}
\renewcommand{\leq}{\leqslant}
\renewcommand{\geq}{\geqslant}
\definecolor{vert}{rgb}{0,0.45,0}
\definecolor{rouge}{rgb}{0.89,0.04,0.36}
\definecolor{MyGray}{gray}{0.6}
\definecolor{MyRed}{RGB}{212,42,42}
\renewcommand{\int}[1]{\llbracket #1 \rrbracket}
\newcommand{\pair}[1]{\langle #1 \rangle}
\newcommand{\boxx}{\boxempty}
\newcommand{\size}[1]{|#1|}
\def\N{\mathbb{N}}
\def\hhmm{\number\hh:\ifnum\mm<10{}0\fi\number\mm}
\newcommand{\st}{\partial}
\def\precdeg{\prec_{\text{deglex}}}
\newcommand{\er}[2]{\,_{#1}{#2}}
\newcommand{\re}[2]{{#2}_{#1}}
\newcommand{\ere}[2]{\,_{#1}{#2}_{#1}}
\newcommand{\PR}{\er{\poly{P}}{R}}
\newcommand{\RP}{\re{\poly{P}}{R}}
\newcommand{\PRP}{\ere{\poly{P}}{R}}
\def\dar[#1,#2,#3]{\ar@<0.8ex>[#1] ^{#3} \ar@<-0.8ex>[#1] _{#2} }
\newcommand{\Paux}{\Pr^{\text{db}}}
\newcommand{\auteur}[3]{
\noindent
\begin{minipage}[t]{.45\textwidth}
\begin{flushright}
\textsc{#1} \\
{\footnotesize\textsf{#2}}
\end{flushright} 
\end{minipage}
\qquad
\begin{minipage}[t]{.45\textwidth}
#3
\end{minipage}
}
\begin{document}
\thispagestyle{empty}

\begin{center}
\begin{doublespace}
\begin{huge}
{\scshape Confluence of algebraic rewriting systems}
\end{huge}

\vskip+2pt

\begin{huge}
{\scshape }
\end{huge}

\bigskip
\hrule height 1.5pt 
\bigskip

\vskip+5pt

\begin{Large}
{\scshape Cyrille Chenavier - Benjamin Dupont -- Philippe Malbos}
\end{Large}
\end{doublespace}

\vskip+50pt

\begin{small}\begin{minipage}{14cm}
\noindent\textbf{Abstract --}
Convergent rewriting systems on algebraic structures give methods to solve decision problems, to prove coherence results, and to compute homological invariants. These methods are based on higher-dimensional extensions of the critical branching lemma that proves local confluence from confluence of the critical branchings. The analysis of local confluence of rewriting systems on algebraic structures, such as groups or linear algebras, is complicated because of the underlying algebraic axioms. This article introduces the structure of algebraic polygraph modulo that formalizes the interaction between the rules of an algebraic rewriting system and the inherent algebraic axioms, and we show a critical branching lemma for algebraic polygraphs. We deduce a critical branching lemma for rewriting systems on algebraic models whose axioms are specified by convergent modulo rewriting systems. We illustrate our constructions for string, linear, and group rewriting systems.

\medskip

\smallskip\noindent\textbf{Keywords --} Term rewriting modulo, algebraic polygraphs, string rewriting, linear rewriting, group rewriting.

\medskip

\smallskip\noindent\textbf{M.S.C. 2010 -- Primary:} 68Q42, 18C10.
\textbf{Secondary:} 16S36, 13P10.
\end{minipage}\end{small}
\end{center}

\vskip+0pt

\section{Introduction}

\subsubsection*{Completion procedures}
The critical-pair completion (CPC) is an approach developed in the mid sixties that combines completion procedures and the notion of \emph{critical pair}, also called \emph{critical branching}~\cite{Shirshov62,Bergman78,Buchberger87}. It originates from theorem proving~\cite{Robinson65}, polynomial ideal theory~\cite{Janet20,Buchberger65}, word problem in algebras~\cite{KnuthBendix70,Nivat73,LeChenadec84}, and has found many applications to solve algorithmic problems, see \cite{Buchberger87,IoharaMalbos20} for an historical account.
In the mid eighties CPC has found original and deep applications in algebra in order to solve coherence problems for monoids~\cite{Squier94,GuiraudMalbos18}, and monoidal categories~\cite{GuiraudMalbos12mscs, CurienMimram17}, or to compute homological invariants of associative algebras \cite{Anick86}, and monoids \cite{Squier87,Kobayashi90}. The CPC was extended to two-dimensional rewriting systems in \cite{Mimram10,GuiraudMalbos09}.
More recently, higher-dimensional extensions of the CPC were applied to the computation of free resolutions and cofibrant replacements of algebraic and categorical structures~\cite{GuiraudMalbos12advances, GaussentGuiraudMalbos15, MimramMalbos16,GuiraudHoffbeckMalbos19} and operads~\cite{MalbosRen20, MalbosRen21}. 
The obstructions in each dimension are formulated in terms of critical branchings. While generators and rules are in dimension $1$ and $2$ respectively, the critical branchings, and the critical triple branchings, that is overlappings of rules on critical branchings, describe $3$-dimensional and $4$-dimensional cocycles respectively. This generalizes in higher-dimensions, where for $n\geq 4$, the $n$-dimensional cocycles are described by overlappings of a rule on a critical $(n-1)$-branching. These constructions based on CPC are known for monoids, small categories, and algebras. However, the extension to a wide range of algebraic structures is complicated due to the interaction between the rewriting rules and the inherent axioms of the algebraic structure. For this reason, the higher-dimensional extensions of the CPC for a wide range of algebraic structures, including groups, Lie rings, is still an open problem.

\subsubsection*{Critical branching lemma}
One of the main tools to reach confluence in CPC procedures for algebraic rewriting systems is the \emph{critical branching lemma}, by Knuth-Bendix, \cite{KnuthBendix70}, and Nivat, \cite{Nivat73}. Nivat showed that the local confluence of a string rewriting system (SRS) is decidable, whether it is terminating or not.
The proof is based on classification of the local branchings into \emph{orthogonal} branchings, that involve two rules that do not overlap, and \emph{overlapping} branchings. A \emph{critical branching} is a minimal overlapping application of two rules on the same redex.
When the orthogonal branchings are confluent, if all critical branchings are confluent, then local confluence holds.
Thus, the main argument to achieve critical branching lemma is to prove that orthogonal and overlapping branchings are confluent.
For SRS and term rewriting systems (TRS), orthogonal branchings are always confluent, and confluence of critical branchings implies confluence of overlapping branchings. The situation is more complicated for rewriting systems on a linear structure.

The well known approaches of rewriting in the linear context consist in orienting the rules with respect to an ambiant monomial order, and critical branching lemma is well known in this context. However, 
some algebras do not admit any higher-dimensional finite convergent presentation on a fixed set of generators with respect to a monomial order, \cite{GuiraudHoffbeckMalbos19}.
Due to algebraic perspectives, an approach of linear rewriting where the orientation of rules does not depend of a monomial order was introduced in~\cite{GuiraudHoffbeckMalbos19}. 
However, in that setting there are two conditions to guarantee a critical branching lemma, namely termination and positivity of reductions. A positive reduction for a linear rewriting system (LRS), as defined in~\cite{GuiraudHoffbeckMalbos19}, is the application of a reduction rule on a monomial that does not appear in the polynomial context.
For instance, consider the LRS on an associative algebra given in \cite{GuiraudHoffbeckMalbos19} defined by the following two rules 
\[
\alpha: xy\fl xz,
\qquad
\beta : zt \fl 2yt.
\]
It has no critical branching, but it has the following non-confluent additive branching:

\vskip-10pt

\[
\xymatrix @R=0.18em {
&& 4xyt 
	\ar [r] ^-*+{4\alpha t}
& 4xzt
	\ar [r] ^-*+{4x\beta}
& \cdots
\\
& 2xzt
	\ar@/^/ [ur] ^-*+{2x\beta}
	\ar@{.>} [dr] ^(0.55)*+{xzt + x\beta}
\\
xyt + xzt 
	\ar@/^/ [ur] ^-*+{\alpha t + xzt}
	\ar@/_/ [dr] _-*+{xyt + x\beta}
	\ar@{} [rr] |-{ =}
&& xzt + 2xyt
\\
& 3xyt
	\ar@{.>} [ur] _(0.55)*+{\alpha t + 2xyt}
	\ar@/_/ [dr] _-*+{3\alpha t}
\\
&& 3xzt
	\ar [r] _-*+{3x\beta}
& 6xyt
	\ar [r] _-*+{6\alpha t}
& \cdots
}
\]
The dotted arrows correspond to non positive reductions. This example illustrates that the lack of termination is an obstruction to confluence of orthogonal branchings in a \emph{left-monomial LRS}, that is whose rules transform a monomial into a polynomial. Indeed, the critical branching lemma for linear $2$-dimensional polygraphs states that a terminating left-monomial linear polygraph is locally confluent if and only if all its critical branchings are confluent, {\cite[Theorem 4.2.1]{GuiraudHoffbeckMalbos19}}

\subsubsection*{Rewriting modulo}
Rewriting modulo appears naturally in algebraic rewriting when studied reductions are defined modulo the axioms of an ambiant algebraic or categorical structure, \emph{e.g.} rewriting in commutative, groupoidal, linear, pivotal, weak structures. Furthermore, rewriting modulo facilitates the analysis of confluence. In particular, rewriting modulo a set of relations makes the property of confluence easier to prove. Indeed, the family of critical branchings that should be considered in the analysis of confluence is reduced, and the non-orientation of a part of the relations allows more flexibility when reaching confluence.

The most naive approach of rewriting modulo is to consider the rewriting system $\PRP$ consisting in rewriting on congruence classes modulo the axioms $\poly{P}$. This approach works for some equational theories, such as associative and commutative theories. However, it appears inefficient in general for the analysis of confluence. Indeed, the reducibility of an equivalence class needs to explore all the class, hence it requires all equivalence classes to be finite.
Another approach of rewriting modulo has been considered by Huet in \cite{Huet80}, where rewriting sequences involve only oriented rules and no equivalence steps, and the confluence property is formulated modulo equivalence. However, for algebraic rewriting systems such rewriting modulo is  too restrictive for computations, see~\cite{JouannaudLi12}. 
Peterson and Stickel introduced in~\cite{PetersonStickel81} an extension of Knuth-Bendix's completion procedure,~\cite{KnuthBendix70}, to reach confluence of a rewriting system modulo an equational theory, for which a finite, complete unification algorithm is known. They applied their procedure to rewriting systems modulo axioms of associativity and commutativity, in order to rewrite in free commutative groups, commutative unitary rings, and distributive lattices.
Jouannaud and Kirchner enlarged this approach in \cite{JouannaudKirchner84} with the definition of rewriting properties for any rewriting system modulo~$S$ such that $R\subseteq S \subseteq \PRP$.
They also proved a critical branching lemma and developed a completion procedure for rewriting systems modulo $\PR$, whose one-step reductions consist in application of a rule in $R$ using $\poly{P}$-matching at the source. Their completion procedure is based on a finite $\poly{P}$-unification algorithm. Bachmair and Dershowitz in \cite{BachmairDershowitz89} developed a generalisation of Jouannaud-Kirchner's completion procedure using inference rules. Several other approaches have also been studied for TRS modulo to deal with various equational theories, see \cite{MR778047, Viry95, marche1993, Marche96}.

\subsubsection*{Algebraic and categorical rewriting}
In this article, we use the notion of cartesian polygraphs as categorical models of TRSs introduced in \cite{MalbosMimram21} to formulate our constructions and prove our results. The polygraphic language provides a unified categorical framework for algebraic rewriting paradigms: abstract, string, term, linear rewriting and their higher-dimensional versions. Polygraphs also provide a natural setting to formulate higher-dimensional rewriting concepts such as coherence, that is two-dimensional word problems~\cite{Squier94, Lafont95, GuiraudMalbos18, curien2021coherent}, and normalisation strategies as rewriting tools to prove homotopical properties in higher algebra theory, \cite{GaussentGuiraudMalbos15,GuiraudMalbos12advances}. 
In Section~\ref{S:Preliminaries}, we recall the notion of cartesian $2$-dimensional polygraphs introduced in \cite{MalbosMimram21} as categorical interpretations of TRS and presentations of Lawvere algebraic theories. A \emph{cartesian $2$-polygraph} is defined by an equational signature $(\poly{P}_0,\poly{P}_1)$ and a cellular extension $\poly{P}_2$ of the free algebraic theory $\freetha{\poly{P}}_1$ on $(\poly{P}_0,\poly{P}_1)$. A rewriting path corresponds to a $2$-cell in the free algebraic $2$-theory generated by the $2$-polygraph $(\poly{P}_0,\poly{P}_1,\poly{P}_2)$.

\subsubsection*{Algebraic polygraphs}
In Section~\ref{S:AlgebraicPolygraphsModulo}, we introduce a categorical model for rewriting in algebraic structures which formalizes the interaction between the rules of the rewriting system and the inherent axioms of the algebraic structure.
We define the structure of \emph{algebraic polygraph} as a data $(\poly{P},Q,R)$ made of a cartesian $2$-polygraph $\poly{P}$ and a set $Q$ of generating ground terms and a cellular extension $R$ on the ground terms. In Section~\ref{S:AlgebraicPolygraphs}, we introduce a notion of \emph{positive reduction strategy} on an algebraic polygraph in order to select admissible rewriting steps used to formulate rewriting properties modulo. 
The idea is to avoid termination and confluence obstructions from the underlying axioms for the quotiented algebraic rewriting system defined as a projection of the positive reductions in Section~\ref{SS:AlgebraicRewrSystem}.

\subsubsection*{Algebraic critical branching lemma}
Following \cite{DupontMalbos18b}, in Section~\ref{SS:AlgebraicPolygraphModulo} we define  the structure of algebraic polygraph modulo as a data $\Pr=(\poly{P},Q,R,S)$ made of an algebraic polygraph $(\poly{P},Q,R)$ and a cellular extension $S$ on the ground terms, and that depends on the cellular extension $R$ and the algebraic axioms of $\poly{P}_2$. As a consequence, the rewriting properties of $\Pr$ depend on the interaction between the rules of the rewriting system and the inherent axioms of the algebraic structure. In Section~\ref{S:ConfluenceAlgebraicPolygraphsModulo}, we prove the Newman lemma for quasi-terminating algebraic polygraphs modulo, stated as follows:

\begin{quote}
{\bf Theorem \ref{T:NewmanLemmaModulo}.}
\emph{
Let $\Pr$ be a quasi-terminating algebraic polygraph modulo, and $\sigma$ be a positive strategy on $\Pr$. If $\Pr$ is locally $\sigma$-confluent modulo, then it is $\sigma$-confluent modulo.
}
\end{quote}

\noindent Then we prove a critical branching lemma for quasi-terminating algebraic polygraphs modulo. 

\begin{quote}
{\bf Theorem \ref{T:AlgebraicCriticalBranchingTheorem}.}
\emph{
Let $\Pr=(\poly{P},Q,R,S)$ be an algebraic polygraph modulo with a positive confluent strategy $\sigma$. If $\PRP$ is quasi-terminating, then an algebraic rewriting system on $\Pr$ is locally confluent if, and only if, its critical branchings are confluent.
}
\end{quote}

\noindent We deduce from this result a critical branching lemma for rewriting systems on algebraic structures, whose axioms are specified by TRS satisfying appropriate convergence properties modulo AC. Finally, we apply the above results to the linear rewriting setting. In particular, we explain why termination is a necessary condition to characterize local confluence in that case.

\medskip

\subsubsection*{Convention and notations}
An \emph{abstract rewriting system} (ARS) is a data $(X,R)$ made of a set $X$ and a set $R$ equipped with source and target maps $\partial^-,\partial^+: R \to X$ called a \emph{cellular extension} of $X$. An element $r$ of $R$ is denoted by $r_- \fl r_+$, where $r_-:=\partial^-(r)$ and $r_+:=\partial^+(r)$. We say that $r$ composes with $r'$ if $\partial^+(r) = \partial^-(r')$. We denote by $\overset{\ast}{\fl}$ the symmetric, transitive closure of $\fl$ with respect to this composition. We say that $x$ \emph{rewrites into} $y$ if $x \overset{\ast}{\fl} y$.

The ARS $(X,R)$ is \emph{terminating} (resp. \emph{quasi-terminating}) if there is no sequence $(x_n)_{n \in \Nb}$ such that $x_n \fl x_{n+1}$ (resp. if for each sequence $(x_n)_{n \in \Nb}$  such that $x_n \fl x_{n+1}$, the sequence $(x_n)_{n \in \Nb}$ contains an infinite number of occurrences of the same element). It is \emph{confluent} if, whenever $x \overset{\ast}{\fl} y$ and $x \overset{\ast}{\fl} z$, there exists $t$ such that $y \overset{\ast}{\fl} t$ and $z \overset{\ast}{\fl} t$. An element $x$ of $X$ is called a \emph{normal form} for $(X,R)$ if there is no $y$ such that $x \fl y$. Given an equivalence relation $\equiv$ on $X$, we say that $(X,R)$ is \emph{confluent modulo $\equiv$} if, whenever $x \equiv y$ and $x \overset{\ast}{\fl} x'$, $y \overset{\ast}{\fl} y'$, there exist $z, z' \in X$ such that $x' \overset{\ast}{\fl} z$, $y' \overset{\ast}{\fl} z'$, and $z \equiv z'$. 

\section{Preliminaries on algebraic theories}
\label{S:Preliminaries}

In this section we recall notions on algebraic theories from \cite{Lawvere63} and the structure of cartesian polygraph introduced in~\cite{MalbosMimram21} as a categorical model of term rewriting systems.

\subsection{Cartesian polygraphs and theories}
\label{SS:CartesianPolygraphsTheories}

\subsubsection{Signature and terms}
\label{SSS:SignatureAndTerms}
A \emph{signature} on a set $\poly{P}_0$ of \emph{sorts} is a directed graph
\[
\xymatrix{
\poly{P}_0^\ast
&
\ar@<+0.5ex>[l]^-{\partial_0^+}
\ar@<-0.5ex>[l]_-{\partial_0^-}
\poly{P}_1
}
\]
on the free monoid $\poly{P}_0^\ast$ over $\poly{P}_0$.
From a higher-dimensional rewriting approach, the data $(\poly{P}_0, \poly{P}_1)$ is called a \emph{$1$-polygraph}. 
An element $\alpha$ of $\poly{P}_1$ is called an \emph{operation}, and its source $\partial_0^-(\alpha) \in \poly{P}_0^\ast$ is called its \emph{arity} and its target $\partial_0^+(\alpha) \in \poly{P}_0$ its \emph{coarity}. For sorts $\ob{s}_1,\ldots,\ob{s}_k$, we denote $\sort{s}=\ob{s}_1\ldots \ob{s}_k$ their product in the free monoid $\poly{P}_0^\ast$. We denote $\size{\sort{s}}=k$ the \emph{length} of $\sort{s}$ and the sort $\ob{s}_i$ in $\sort{s}$ will be denoted by $\sort{s}_i$, so that $\sort{s}_i \in \poly{P}_0$.

Recall from \cite{Lawvere63} that a \emph{(multityped Lawvere algebraic) theory} on a set $\poly{P}_0$ of sorts is a category with finite products $\tha{T}$ together with a map $\iota : \poly{P}_0 \fl \tha{T}_0$, where $\tha{T}_0$ denotes the set of $0$-cells, and such that every $0$-cell in $\tha{T}_0$ is isomorphic to a finite product of $0$-cells in $\iota(\poly{P}_0)$. 
We denote by $\freetha{\poly{P}}_1$ the \emph{free theory generated} by a signature $(\poly{P}_0,\poly{P}_1)$. 
Its products on $0$-cells are induced by products of sorts in $\poly{P}_0^\ast$, and its $1$-cells are \emph{terms} over $\poly{P}_1$ defined by induction as follows:
\begin{enumerate}[{\bf i)}]
\item the canonical projections $x_i^{\sort{s}} : \sort{s} \fl \sort{s}_i$, for $1\leq i \leq \size{\sort{s}}$ are terms, called \emph{variables},
\item for all terms $f:\sort{s} \fl \ob{r}$ and $f':\sort{s} \fl \ob{r'}$ in $\freetha{\poly{P}}_1$, there exists a unique $1$-cell $\pair{f,f'} : \sort{s} \fl \ob{r}\ob{r'}$, called the \emph{pairing} of  terms $f,f'$, such that $x_1^{\ob{rr'}}\pair{f,f'} = f$ and $x_2^{\ob{rr'}}\pair{f,f'} = f'$,
\item for every operation $\varphi : \sort{r} \fl \ob{s}$ in $\poly{P}_1$, $\sort{s}$ in $\poly{P}_0^\ast$ and terms $f_i : \sort{s} \fl \sort{r}_i$ in $\freetha{\poly{P}}_1$ for $1\leq i \leq \size{\sort{r}}$, there is a term $\varphi\pair{f_1,\ldots,f_{\size{\sort{r}}}} : \sort{s} \fl \ob{s}$.
\end{enumerate}
We define the \emph{size} of a term $f$ as the minimal number, denoted by $\size{f}$, of operations used in its definition. The composition of terms  $f$ and $g$ is denoted by concatenation $fg$. For all $0$-cells $\sort{s},\sort{s}\ob{'}$ in $\freetha{\poly{P}}_1$, we denote by $\id_{\sort{s}}$ the identity $1$-cell on a $0$-cell $\sort{s}$, we denote by $e_{\sort{s}}$ the \emph{eraser} $1$-cell defined as the unique $1$-cell from~$\sort{s}$ to the terminal $0$-cell $\ob{0}$.
We denote respectively by $x_{\sort{s}}^{\sort{s}\sort{s}\ob{'}}:\sort{s}\sort{s}\ob{'}\fl \sort{s}$ (resp. $x_{\sort{s}\ob{'}}^{\sort{s}\sort{s}\ob{'}}:\sort{s}\sort{s}\ob{'}\fl \sort{s}\ob{'}$) the canonical projections. Finally, we denote by $\tau_{\sort{s},\sort{s}\ob{'}}:\sort{s}\sort{s}\ob{'} \fl \sort{s}\ob{'}\sort{s}$ the \emph{exchange} $1$-cell defined by 
$\tau_{\sort{s},\sort{s}\ob{'}} = \pair{x_{\sort{s}\ob{'}}^{\sort{s}\sort{s}\ob{'}},x_{\sort{s}}^{\sort{s}\sort{s}\ob{'}}
}$.

\subsubsection{Two-dimensional cartesian polygraphs}
A \emph{cartesian $2$-polygraph} $\poly{P}$ is a data $(\poly{P}_0,\poly{P}_1,\poly{P}_2)$ made~of 
\begin{enumerate}[{\bf i)}]
\item a signature $(\poly{P}_0,\poly{P}_1)$,
\item a cellular extension of the free theory $\freetha{\poly{P}}_1$, that is a set $\poly{P}_2$ equipped with two maps 
\[
\xymatrix{
\freetha{\poly{P}}_1
&
\ar@<+0.5ex>[l]^-{\partial_1^+}
\ar@<-0.5ex>[l]_-{\partial_1^-}
\poly{P}_2
}
\]
satisfying the following \emph{globular conditions} $\partial_0^\mu\circ \partial_1^- = \partial_0^\mu\circ \partial_1^+$, for $\mu\in\{-,+\}$.
\end{enumerate}
In the sequel, by abuse of notation, we let $\poly{P}_i$ stand for the underlying of a polygraph~$\poly{P}$.
An element $A$ of $\poly{P}_2$ is called a \emph{rule} with \emph{source} $\partial_1^-(A)$ and target $\partial_1^+(A)$, denoted respectively by $A_-$ and $A_+$. The globular conditions impose that a rule relates terms of same arity and coarity, and it will be pictured as follows:
\[
\xymatrix@C=4.5em{
\sort{s}
	\ar@/^3ex/ [r] ^-{A_-} ^{}="src"
	\ar@/_3ex/ [r] _-{A_+} ^{}="tgt"
	\ar@2 "src"!<0pt,-10pt>;"tgt"!<0pt,10pt> ^-{\;A}
&
\ob{r}
}
\quad\text{with}\quad
\sort{s} = \partial_0^-(A_-)=\partial_0^-(A_+),
\quad
\ob{r} = \partial_0^+(A_-)=\partial_0^+(A_+).
\]

\subsubsection{Two-dimensional theories}
Recall that a \emph{$2$-category} is a category enriched in categories. Explicitly, a $2$-category is a data $\mathcal{C}$ made of a set $\mathcal{C}_0$, whose elements are called the \emph{$0$-cells} of $\mathcal{C}$, and, for all $0$-cells $x,y$ of $\mathcal{C}$, a category $\mathcal{C}(x, y)$, whose $0$-cells and $1$-cells are respectively the \emph{$1$-cells} and \emph{$2$-cells} from $x$ to $y$ of $\mathcal{C}$. This data is equipped with a functor 
\[
\star_0^{x,y,z} : \mathcal{C}(x,y) \times \mathcal{C}(y, z) \fl \mathcal{C}(x, z),
\]
for all $0$-cells $x,y,z$ of $\mathcal{C}$, and a specified $0$-cell $\id_x$ of the category $\mathcal{C}(x, x)$. The composition $\star_0$ is associative, and the identities are local units for the composition. For $f_1\in \mathcal{C}(x,y)$ and $f_2\in \mathcal{C}(y,z)$, we write $f_1\star_0 f_2$ instead of $f_1\star_0^{x,y,z} f_2$. For $2$-cells $f_1,g_1$ in $\mathcal{C}(x,y)$ such that $(f_1)_+=(g_1)_-$, we denote by $f_1\star_1 g_1$ their composition along a $1$-cell from $x$ to $y$. The compositions $\star_0$ and $\star_1$ satisfy the \emph{exchange law}:
\[
(f_1\star_0 f_2) \star_1 (g_1\star_0 g_2) = (f_1\star_1 g_1) \star_0 (f_2\star_1 g_2),
\]
for all composable $2$-cells $f_i,g_i$ in $\mathcal{C}$. 

Recall that a \emph{$2$-theory} on a set of sorts $\poly{P}_0$ is a $2$-category with the additional following cartesian structure:
\begin{enumerate}[{\bf i)}]
\item it has a terminal $0$-cell \ob{0}, that is for every $0$-cell $\sort{s}$ there exists a unique \emph{eraser $1$-cell} $e_{\sort{s}} : \sort{s} \fl \ob{0}$, and the identity $2$-cell is the unique endo-$2$-cell on an eraser,
\item it has products, that is for all $0$-cells $\sort{r},\sort{r}\ob{'}$ there is a product $0$-cell $\sort{r}\sort{r}\ob{'}$ and $1$-cells $x_{\sort{r}}^{\sort{r}\sort{r}\ob{'}}: \sort{r}\sort{r}\ob{'} \fl \sort{r}$ and $x_{\sort{r}\ob{'}}^{\sort{r}\sort{r}\ob{'}}: \sort{r}\sort{r}\ob{'} \fl \sort{r}\ob{'}$ satisfying the following two conditions:
\begin{itemize}
\item for all $1$-cells $f_1:\sort{s} \fl \sort{r}$ and $f_2:\sort{s} \fl \sort{r}\ob{'}$, there exists a unique \emph{pairing $1$-cell} $\pair{f_1,f_2} : \sort{s} \fl \sort{r}\sort{r}\ob{'}$, such that $x_{\sort{r}}^{\sort{r}\sort{r}\ob{'}}\pair{f_1,f_2} =f_1$, and $x_{\sort{r}\ob{'}}^{\sort{r}\sort{r}\ob{'}}\pair{f_1,f_2} =f_2$,
\item for all $2$-cells $a_i : f_i \dfl f_i'$, $i=1,2$, there exists a unique $2$-cell $\pair{a_1,a_2} : \pair{f_1,f_2} \dfl \pair{f_1',f_2'}$. 
For $1$-cells $f_1,\ldots,f_k$, we will abbreviate $\pair{\id_{f_1},\ldots,\id_{f_k}}$ to $\pair{f_1,\ldots,f_k}$.
\end{itemize}
\end{enumerate}

A \emph{$(2,1)$-theory} is a $2$-theory whose every $2$-cell is invertible with respect to the $\star_1$-composition, \emph{i.e.}, every $2$-cell $a$ has an inverse $a^- : a_+ \dfl a_-$ 
satisfying the relations $ a \star_1 a^- = \id_{a_-}$ and~$a^- \star_1 a = \id_{a_+}$.

\subsubsection{Free $2$-theories}
\label{SS:Free2Theory}
We denote by $\freetha{\poly{P}}_2$ the free $2$-theory generated by a cartesian $2$-polygraph $\poly{P}$.
Its underlying $1$-category is the free theory $\freetha{\poly{P}}_1$ generated by the signature $(\poly{P}_0,\poly{P}_1)$. 
Its $2$-cells are defined inductively as follows:
\begin{enumerate}[{\bf i)}]
\item for all $2$-cell $A: f \dfl g$ in $\poly{P}_2$ and $1$-cell $h$ in $\freetha{\poly{P}}_1$, there is a $2$-cell $A h: fh \dfl gh$ in $\freetha{\poly{P}}_2$,
\item for all $2$-cells $a,b$ in $\freetha{\poly{P}}_2$, there is a $2$-cell $\pair{ a, b} : \pair{a_-, b_-} \dfl \pair{a_+,b_+}$ in $\freetha{\poly{P}}_2$,
\item for every $2$-cell $a$ in $\freetha{\poly{P}}_2$, there is a $2$-cell in $\freetha{\poly{P}}_2$ of the form $\Gamma[a] : \Gamma[a_-] \dfl \Gamma[a_+]$, where $\Gamma$ denotes a \emph{context} of the form:
\[
\Gamma := f\pair{f_1,\ldots,\boxx_j,\ldots,f_k} : \sort{s} \fl \ob{r},
\]
where $f_i:\sort{s} \fl \sort{r_i}$ and $f: \sort{r} \fl \ob{r}$ are $1$-cells of $\freetha{\poly{P}}_1$, and $\boxx_j$ is the $j$-th element of the pairing.
\item these $2$-cells are submitted to the following exchange relations 
\begin{equation*}
\label{E:exchangeRelation2cells}
 f \pair{f_1,...,a,...,f_j,...,f_k} \star_1 f \pair{f_1,...,f_i,...,b,...,f_k} = f \pair{f_1,...,f_i,...,b,...,f_k}\star_1 f \pair{f_1,a,...,f_j,...,f_k}
\end{equation*}
where $f_i: \sort{s} \fl \sort{r_i}$ and $f: \sort{r} \fl \ob{r}$ are $1$-cells in $\freetha{\poly{P}}_1$, and $a,b$ are $2$-cells in $\freetha{\poly{P}}_2$.
We will denote by $f\pair{f_1,...,a,...,b,...,f_k}$ the $2$-cell defined above.
\item The $\star_1$-composition of $2$-cells in $\poly{P}_2$ is given by sequential composition. 
\end{enumerate}
The source and target maps $\st^\pm_1$ extend to $\freetha{\poly{P}}_2$ and we denote $a_-$ and $a_+$ for $\st_1^-(a)$ and $\st_1^+(a)$ respectively.

The \emph{free $(2,1)$-theory} generated by $\poly{P}$, denoted by $\tck{\poly{P}}_2$, is constructed as the $2$-theory generated by cells of $\poly{P}$ and formal inverses of the $2$-cells of $\freetha{\poly{P}}_2$, and submitted to the relations $ a \star_1 a^- = \id_{a_-}$ and~$a^- \star_1 a = \id_{a_+}$, for every $2$-cell $a$. We define the congruence relation on $\freetha{\poly{P}}_1$ by $f \equiv_{\poly{P}} g$ if there is a $2$-cell of $\tck{\poly{P}}_2$ with source $f$ and target $g$. The \emph{theory presented} by $\poly{P}$ is the algebraic theory, denoted by $\cl{\poly{P}}$, and defined as the quotient of the free theory $\freetha{\poly{P}}_1$ by the congruence $\equiv_{\poly{P}}$.

\subsubsection{Ground terms}
Let $\poly{P}$ be a cartesian $2$-polygraph.
A \emph{ground term} in the free theory $\freetha{\poly{P}}_1$ is a term with source $\ob{0}$.
A $2$-cell $a$ in the free theory $\freetha{\poly{P}}_2$ is called \emph{ground} when $a_-$ is a ground term. Finally, a context 
$f\pair{f_1,\ldots,\boxx_j,\ldots f_{\size{\sort{r}}}}$ is called \emph{ground} when all the $f_i$ are ground terms.

\subsubsection{Rewriting properties of cartesian polygraphs}
The contexts can be composed in a natural way, and we will denote by $\Gamma \: \Gamma'[\boxx] := \Gamma[\Gamma'[\boxx]]$ the composition of contexts $\Gamma$ and $\Gamma'$. We define a \emph{multi-context} (of arity $2$) as
\[
\Delta[\boxx_i , \boxx_j] := f \pair{f_1 , \ldots, \boxx_i, \ldots, \boxx_j, \ldots, f_k},
\]
where the $f_k: \sort{s} \fl \sort{r_k}$ and $f: \sort{r} \fl \ob{r}$ are $1$-cells in $\freetha{\poly{P}}_1$, and $\boxx_i$ (resp. $\boxx_j$) has to be filled by a $1$-cell $g_i: \sort{s} \fl \sort{r_i}$ (resp. $g_j: \sort{\ob{s}} \fl \sort{\ob{r}_j}$).

A $2$-cell of the form $\Gamma[A h]$, where $\Gamma$ is a context, $h$ is a $1$-cell in $\freetha{\poly{P}}_1$ and $A$ is a rule in $\poly{P}_2$ is called a \emph{rewriting step} of $\poly{P}$. We consider the ARS $(\freetha{\poly{P}}_1,\poly{P}_{\text{stp}})$ where $\poly{P}_{\text{stp}}$ is the cellular extension made of rewriting steps of $\poly{P}$, whose source and target maps extend the ones of $\poly{P}$. We say that $\poly{P}$ is \emph{terminating} (resp. \emph{quasi-terminating}, \emph{confluent}) if the ARS $(\freetha{\poly{P}_1},\poly{P}_{\text{stp}})$ is so.
If $\poly{P}'$ is a cartesian $2$-polygraph with the same signature as $\poly{P}$, we say that $\poly{P}$ is confluent modulo $\poly{P}'$ if the ARS $(\freetha{\poly{P}_1},\poly{P}_{\text{stp}})$ is confluent modulo~$\equiv_{\poly{P}'}$.

For the sake of readability, we will denote terms and rewriting rules of cartesian polygraphs as in term rewriting theory, \cite{Terese03}. The canonical projection $x_i^{\sort{s}} : \sort{s} \fl \sort{s}_i$, for $1\leq i \leq |\sort{s}|$ is identified to the "\emph{variable}" $x_i$. A $1$-cell $f: \sort{s} \fl \ob{r}$, is denoted by $f(x_1,\ldots,x_{|\sort{s}|})$, and a rule $A : f \dfl g$ with $f,g: \sort{s} \fl {\ob r}$ will be denoted by
\[
A_{x_1,\ldots,x_{|\sort{s}|}} : f(x_1,\ldots,x_{|\sort{s}|}) \dfl g(x_1,\ldots,x_{|\sort{s}|}).
\]

\subsection{Algebraic examples}

\subsubsection{Magmas}
\label{SSS:AssociativeCommutativeMagmas}
Denote by $\poly{Mag}$ the cartesian $2$-polygraph, where  $\poly{Mag}_0:=\{\ob{1}\}$, $\poly{Mag}_1:=\{\mu : \ob{2} \fl \ob{1}\}$, and $\poly{Mag}_2$ is empty.
Denote by $\poly{Ass}$ the cartesian $2$-polygraph, where $\poly{Ass}_1 = \poly{Mag}_1$ and with a unique generating $2$-cell:
\begin{eqn}{equation}
\label{E:AssociativityRule}
\asso{\mu}_{x,y,z} \: : \: \mu (\mu (x,y) ,z ) \dfl \mu (x, \mu(y,z)).
\end{eqn}
Denote by $\poly{AC}$ the cartesian $2$-polygraph, where $\poly{AC}_1 = \poly{Mag}_1$, and $\poly{AC}_2$ is the disjoint union $ \poly{Ass}_2 \sqcup \{ \comm{\mu} \} $ with
\begin{eqn}{equation}
\label{E:CommutativityRule}
\comm{\mu} \: : \: \mu (x,y) \dfl \mu(y,x),
\end{eqn}
that corresponds to the rule $\comm{\mu} :\mu \tau \dfl \mu$, where $\tau$ is the exchanging operator defined in \eqref{SSS:SignatureAndTerms}. Note that the cartesian polygraph $\poly{AC}$ is not terminating, and that the rule $\comm{\mu}$ can not be oriented in a terminating way. As a consequence, for cartesian $2$-polygraphs whose set of rules contains commutativity and associativity for some operation, we will chose to work modulo the polygraph $\poly{AC}$.

The polygraphs $\poly{Mag}$, $\poly{Ass}$, and $\poly{AC}$ will be sometimes denoted by $\Mag{\mu}$, $\Ass{\mu}$, and $\AC{\mu}$ to refer to the label of the operation.

\subsubsection{Monoids}
\label{Ex:TheoryMonoids}
Denote by $\poly{Mon}$, or $\Monn{\mu,e}$, the cartesian $2$-polygraph with $\poly{Mon}_0:=\{\ob{1}\}$, $\poly{Mon}_1:=\Ass{\mu}_1 \sqcup \{e : \ob{0} \fl \ob{1}\}$, and $\poly{Mon}_2:=\Ass{\mu}_2\sqcup \{\neutr{\mu}_l,\neutr{\mu}_r \}$, where
\begin{eqn}{equation}
\label{E:NeutralElementMonoid}
\neutr{\mu}_l \: : \: \mu (e,x) \dfl x, \quad\text{and}\quad \neutr{\mu}_r \: : \:  \mu(x,e) \dfl x.
\end{eqn}
The presented theory $\cl{\poly{Mon}}$ is the theory of monoids. We also define the cartesian polygraph $\poly{CMon}$, with same $0$-cells and $1$-cells, and $\poly{CMon}_2 := \Monn{\mu,e}_2 \sqcup \{ \comm{\mu} \}$, where $\comm{\mu}$ is the commutativity $2$-cell~\eqref{E:CommutativityRule}. 

\subsubsection{Groups}
\label{SSS:PresentationOfGroups}
Denote by $\poly{Grp}$, or $\Grpp{\mu,e,\iota}$, the cartesian $2$-polygraph, where $\poly{Grp}_0:=\{\ob{1}\}$, $\poly{Grp}_1 := \Monn{\mu,e}_1 \sqcup \{ \iota : \ob{1} \fl \ob{1} \}$, and $\poly{Grp}_2 := \Monn{\mu,e}_2 \sqcup \{ \inv{\mu, \iota}_l , \inv{\mu, \iota}_r \}$, with
\begin{eqn}{equation}
\label{E:InverseRelGroup}
\inv{\mu, \iota}_l \: : \: \mu ( \iota(x),x ) \dfl e, \quad\text{and}\quad \inv{\mu, \iota}_r \: : \: \mu( x, \iota(x)) \dfl e.
\end{eqn}
The presented theory $\cl{\poly{Grp}}$ is the theory of groups. Following \cite{Hullot1980ACO}, the set of generating $2$-cells
\begin{align*}
\neutr{\mu}_l, \quad \neutr{\mu}_r, \quad \inv{\mu, \iota}_l, \quad \inv{\mu, \iota}_r, \quad \grp{\mu, \iota}_1 \: : \: \iota (e) \dfl e, \quad \grp{\mu, \iota}_2 \: : \: \iota(\mu(x,y)) \dfl \mu (\iota(y), \iota(x)),
\\
\grp{\mu, \iota}_3 \: : \: \iota (\iota(x)) \dfl x, \quad \grp{\mu, \iota}_4 \ : \: \mu(x,\mu(\iota(x),y)) \dfl y, \quad \grp{\mu, \iota}_5 \: : \: \mu ( \iota(x) , \mu (x,y)) \dfl y,
\end{align*}
defines a polygraph, denoted by $\widetilde{\poly{Grp}}$, that is convergent modulo $\Ass{\mu}$, and presents the theory~$\cl{\poly{Grp}}$.

\subsubsection{Abelian groups}
Denote by $\poly{Ab}$, or $\Abb{\mu,e,\iota}(\ob{1})$, the cartesian $2$-polygraph, where $\poly{Ab}_0:=\{\ob{1}\}$, $\poly{Ab}_1 = \Grpp{\mu,e,\iota}_1 $ and $\poly{Ab}_2 = \Grpp{\mu,e,\iota}_2 \sqcup \{ \comm{\mu} \}$, where $\comm{\mu}$ is the commutativity $2$-cell~\eqref{E:CommutativityRule}.

\subsubsection{Rings}
Denote by $\poly{Ring}$ the cartesian $2$-polygraph, where $\poly{Ring}_0:=\{\ob{1}\}$, 
\[
\poly{Ring}_1 = \Abb{+,0,-}_1 \sqcup \Monn{\,\cdot\,,1}_1
, \quad\text{and}\quad
\poly{Ring}_2 = \Abb{+,0,-}_2 \sqcup \Monn{\,\cdot\,,1}_2 \sqcup \{D_l,D_r\},
\]
with 
\begin{eqn}{equation}
\label{E:Distributivity}
D_l : x \cdot (y+z) \dfl x \cdot y + x \cdot z,
\qquad
D_r : (y+z) \cdot x \dfl y\cdot x + z \cdot x. 
\end{eqn}
Denote by $\poly{CRing}$, or $\poly{CRing}^{(+,0,-,\cdot,1)}(\ob{1})$, the cartesian $2$-polygraph with $\poly{CRing}_i = \poly{Ring}_i$, for $i=0,1$, and $\poly{CRing}_2 = \poly{Ring}_2 \sqcup \{ \comm{\cdot} \}$, where $\comm{\cdot}$ is the commutativity $2$-cell~\eqref{E:CommutativityRule}
The theory $\cl{\poly{CRing}}$ is the theory of commutative rings.
Following \cite{PetersonStickel81}, see also \cite{Hullot1980ACO}, the set of generating $2$-cells:
\begin{eqn}{equation}
\label{E:ConvergentPresentationCRingModAC}
\neutr{+}_r, \: \inv{+,-}_r, \: \grp{+,-}_1, \: \grp{+,-}_2, \: \grp{+,-}_3, \: D_r, \: R_1 \: : \: x \cdot 0 \dfl 0, \: R_2 \: : \: x \cdot (-y) \dfl - (x \cdot y), \: \neutr{\cdot}_r,
 \end{eqn}
defines a cartesian polygraph, that is convergent modulo $\AC{+} \sqcup \AC{\cdot}$, and presents the theory $\cl{\poly{CRing}}$.

\subsubsection{Modules over a commutative ring}
\label{SS:ConvergentPresentationRModModuloAC}
Denote by $\poly{Mod}$ the cartesian $2$-polygraph defined as follows. We set~$\poly{Mod}_0 = \{\ob{m},\ob{r}\}$, $\poly{Mod}_1 = \poly{CRing}^{(+,0,-,\cdot,1)}(\ob{r})_1 \sqcup \Abb{\oplus,0^{\oplus},\iota}(\ob{m})_1 \sqcup \{ \eta : \ob{r}\ob{m} \fl \ob{m}\}$, and we will denote $\eta(\lambda,x) = \lambda.x$, for $\lambda$ and $x$ of type $\ob{r}$ and $\ob{m}$ respectively.
We set
\[
\poly{Mod}_2 = \poly{CRing}^{(+,0,-,\cdot,1)}(\ob{r})_2 \sqcup \Abb{\oplus,0^{\oplus},\iota}(\ob{m})_2 \sqcup \{ M_1, M_2, M_3, M_4\},
\]
with
\begin{align*}
M_1 :  \lambda . (\mu . x)\dfl  (\lambda \cdot \mu).x,
\qquad
M_2 : 1.x \dfl x,\hspace{2.5cm}\\
M_3 : \lambda . (x \oplus y) \dfl (\lambda .x)\oplus (\lambda .y),
\qquad
M_4 : \lambda .x \oplus \mu .x \dfl (\lambda + \mu).x
\end{align*}

Following \cite{Hullot1980ACO}, the $2$-cells in~\eqref{E:ConvergentPresentationCRingModAC} together with the following set of $2$-cells
\begin{eqn}{align}
\label{E:ConvergentPresentationRMod}
M_1,\; M_2, \; M_3, \; M_4, \;\; N_1:x \oplus 0^{\oplus} \dfl x, \;\; N_2 : x \oplus (\lambda.x) \dfl (1+\lambda).x,\nonumber \\
N_3 : x \oplus x \dfl (1 + 1).x, \;\;N_4 : x . 0^\oplus \dfl 0^\oplus, \;\; N_5 : 0. x \dfl 0^\oplus, \;\; N_6 : \iota(x) \dfl (- 1).x,
\end{eqn}
gives a convergent presentation of the theory of modules over a commutative ring modulo the cartesian polygraph $\AC{+} \sqcup \AC{\cdot}$. This presentation can be summed up in the following set of rules:   
\begin{align*}
& x + 0 \dfl x & (\text{ring}_1) \qquad \qquad &  x + (-x) \dfl 0 \qquad  & (\text{ring}_2) \\
& - 0 \dfl 0 & (\text{ring}_3) \qquad \qquad &  - (-x) \dfl x \qquad & (\text{ring}_4) \\
& - (x+y) \dfl (-x) + (-y) & (\text{ring}_5) \qquad \qquad  & x \cdot (y + z) \dfl x \cdot y + x \cdot z \ \qquad & (\text{ring}_6) \\
& x \cdot 0 \dfl 0 & (\text{ring}_7) \qquad \qquad & x \cdot (-y) \dfl - (x \cdot y) \qquad & (\text{ring}_8) \\
& 1 \cdot x \dfl x & (\text{ring}_9) \qquad \qquad & a \oplus 0^{\oplus} \dfl a \qquad & (\text{mod}_1) \\
& x . (y . a) \dfl (x \cdot y) . a & (\text{mod}_2) \qquad \qquad & 1 . a \dfl a \qquad & (\text{mod}_3) \\
& x . a \oplus y . a \dfl (x + y) . a & (\text{mod}_4) \qquad \qquad & x . (a \oplus b) \dfl (x . a) \oplus (y . b) \qquad & (\text{mod}_5) \\
& a \oplus (r . a) \dfl (1+r) . a & (\text{mod}_6) \qquad \qquad & a \oplus a \dfl (1+1) . a \qquad & (\text{mod}_7) \\
& x . 0^{\oplus} \dfl 0^{\oplus} & (\text{mod}_8) \qquad \qquad & 0 . a \dfl 0^{\oplus} \qquad & (\text{mod}_{9}) \\
& I(a) \dfl (-1) . a  & (\text{mod}_{10}) \qquad \qquad & &
\end{align*}
Let us denote by $\poly{Mod}'_2$ the set containing the $2$-cells~\eqref{E:ConvergentPresentationCRingModAC} and \eqref{E:ConvergentPresentationRMod}.
We denote by $\poly{Mod}^{\textsf{c}}$ the cartesian $2$-polygraph $(\poly{Mod}_0, \poly{Mod}_1, \poly{Mod}'_2 \sqcup \AC{+} \sqcup \AC{\cdot})$. It also presents the theory $\cl{\poly{Mod}}$ of modules over a commutative ring.

\section{Algebraic polygraphs modulo}
\label{S:AlgebraicPolygraphsModulo}

In this section we introduce the notion of algebraic polygraphs, defined by cellular extensions on ground terms over a signature endowed with constants, and the notion of algebraic polygraphs modulo. We refer the reader to \cite{DupontMalbos18b} for a categorical formulation of the constructions given in this section. 

\subsection{Algebraic polygraphs}
\label{S:AlgebraicPolygraphs}

\subsubsection{Algebraic polygraphs}
An \emph{algebraic polygraph} is a data $(\poly{P},Q,R)$ made of
\begin{enumerate}[{\bf i)}]
\item a cartesian $2$-polygraph $\poly{P}$, 
\item a cellular extension $Q$ of $\poly{P}_0$ whose elements have source $\ob{0}$, and called \emph{constants},
\item a cellular extension $R$ of the sub-theory of the free theory $\freetha{(\poly{P}_0,\poly{P}_1\sqcup Q)}$ made of all ground terms, denoted by $\poly{P}_1\langle Q \rangle$.
\end{enumerate}

We have a decomposition 
\[ 
\poly{P}_1 \langle Q \rangle = \bigsqcup\limits_{s \in \poly{P}_0} \poly{P}_1 \langle Q \rangle_s, 
\]
where $\poly{P}_1 \langle Q \rangle_s$ contains the ground terms of coarity $s$, hence   the cellular extension $R$ is also indexed by the sorts of $\poly{P}_0$, so that it defines a 
family $(\poly{P}_1 \langle Q \rangle_s,R_{s})_{s\in \poly{P}_0}$ of ARSs.

\subsubsection{Rewriting properties of algebraic polygraphs}
Let $\Pr = (\poly{P},Q,R)$ be an algebraic polygraph. A \emph{$R$-rewriting step} is a ground $2$-cell in the free $2$-theory $\freetha{R}$ generated by $(\poly{P}_0,\poly{P}_1 \sqcup Q, R)$ of the form
\[
\Gamma[A] : \Gamma[f] \dfl \Gamma[g],
\]  
where $A : f \dfl g $ is a rule in $R$, and $\Gamma$ is a ground context. We denote by $R_{\text{stp}}$ the cellular extension made of $R$-rewriting steps of $\Pr$, whose source and target maps extend the ones of $R$. We say that $\Pr$ is \emph{terminating} (resp. \emph{quasi-terminating}, \emph{confluent}) if the ARS $(\poly{P}_1 \langle Q \rangle, R_{\text{stp}})$ is so. A \emph{$R$-rewriting path} is a finite or infinite sequence $a = a_1  \star_1 \ldots \star_1 a_k \star_1 \ldots$ of $R$-rewriting steps~$a_i$. 
The \emph{length} of a finite $R$-rewriting path $a$, denoted by $\ell(a)$, is the number of $R$-rewriting steps that it contains.

\smallskip
The cellular extension $\poly{P}_2$ of $\freetha{\poly{P}}_1$ extends to a cellular extension of the free $1$-theory $\freetha{(\poly{P}_1 \sqcup Q)}$. We denote by $\poly{P}_2 \langle Q \rangle$ the set of \emph{ground $2$-cells on $Q$} of the free $2$-theory generated by the $2$-polygraph $(\poly{P}_0,\poly{P}_1 \sqcup Q, \poly{P}_2)$. The data $(\poly{P},Q,\poly{P}_2 \langle Q \rangle)$ defines an algebraic polygraph. Two $1$-cells $f,g$ in $\poly{P}_1 \langle Q \rangle$ are \emph{algebraically equivalent} with respect to $\poly{P}$, and we denote $f \equiv_{\poly{P}_2 \langle Q \rangle} g$, if there exists a $2$-cell in $\tck{\poly{P}_2 \langle Q \rangle}$ with source $f$ and target $g$.

\smallskip
Let $\poly{P}'=(\poly{P}_0,\poly{P}_1,\poly{P}'_2)$ be a cartesian $2$-polygraph with the same signature as $\poly{P}$. We say that $\Pr$ is \emph{confluent modulo} the algebraic polygraph $(\poly{P}',Q,\poly{P}'_2 \langle Q \rangle)$ if the ARS $(\poly{P}_1 \langle Q \rangle, R_{\text{stp}})$ is confluent modulo $\equiv_{\poly{P}'_2 \langle Q \rangle}$.
The algebraic polygraph $(\poly{P},Q,\poly{P}_2 \langle Q \rangle)$ shares the rewriting properties of the polygraph $\poly{P}$. In particular, if $\poly{P}$ is terminating (resp. quasi-terminating, confluent), then so is $(\poly{P},Q,\poly{P}_2 \langle Q \rangle)$. Moreover, if $\poly{P}$ is confluent modulo $\poly{P}'$, then $(\poly{P},Q,\poly{P}_2 \langle Q \rangle)$ is confluent modulo $(\poly{P}',Q,\poly{P}'_2 \langle Q \rangle)$.

\subsubsection{Positive reduction strategies}
\label{SSS:PositiveStragegies} 
Denote by $\cl{\poly{P}\langle Q \rangle}$ the quotient of the theory $ \poly{P}_1\langle Q \rangle$ by the congruence relation $\equiv_{\poly{P}_2 \langle Q \rangle}$. In~\eqref{SS:AlgebraicRewrSystem}, we will consider rewriting with respect to a quotient algebraic system on $\cl{\poly{P} \langle Q \rangle}$ whose rules are the projections of the rules of $R$. Rewriting properties of this latter depend on $\poly{P}$.
In many situations, if we consider projections of all the $R$-rewriting steps we lose termination in the quotient rewriting system. This is the case when the algebraic theory is equipped with inverse operators, such as theories $\cl{\poly{Mod}}$ and $\cl{\poly{Grp}}$.
To prevent this, we need to select admissible $R$-rewriting steps compatible with~$\poly{P}$ using the following notion of strategy. 

\smallskip
Let $\pi : \poly{P}_1\langle Q \rangle \fl \cl{\poly{P}\langle Q\rangle}
$ be the canonical projection. We define a \emph{positive strategy} $\sigma$ as a map that associates to every $\cl{f} \in \cl{\poly{P} \langle Q \rangle}$ a non-empty subset $\sigma(\cl{f})$ of $\pi^{-1}(\cl{f})$. A $R$-rewriting step $a$ is called \emph{$\sigma$-positive} if $a_-$ belongs to $\sigma(\pi(a_-))$, and a $R$-rewriting path is called \emph{$\sigma$-positive} if every of its rewriting steps is positive.

\smallskip
In most cases, a positive strategy is defined uniformly with respect to $\poly{P}$ as follows. Suppose that $\poly{P}$ has a decomposition $\poly{P}_2 = \poly{P}'_2 \sqcup \poly{P}''_2$, where $\poly{P}'_2$ is terminating and confluent modulo $\poly{P}''_2$.  For every $1$-cell $\cl{f}$ in $\cl{\poly{P}\langle Q \rangle}$, we set
\[
\sigma(\cl{f}) = \bigsqcup\limits_{f \in \pi^{-1}(\cl{f})} NF(f,\poly{P}'_2),
\]
where $NF(f,\poly{P}'_2)$ is the set of normal forms of $f \in \poly{P}_1 \langle Q \rangle$ with respect to $\poly{P}'_2$. 
By confluence of $\poly{P}'_2$ modulo $\poly{P}''_2$, we deduce from \cite[Lemma 2.6]{Huet80} that any two elements of $\sigma(\cl{f})$ are congruent modulo $\poly{P}''_2$.

\subsubsection{Remarks}
\label{R:RemarkStrategies}
In many algebraic rewriting contexts, we have $\Ass{\mu} \subseteq \poly{P}''_2$. For instance, in the case of algebraic polygraphs over $\Monn{\mu}$, the usual strategy is obtained with $\poly{P}'_2$ empty and $\poly{P}''_2 = \Ass{\mu}$. Hence, every $1$-cell in $\poly{P}_1 \langle Q \rangle$ is a normal form for the empty polygraph modulo $\Ass{\mu}$, and thus the positive strategy consists in taking all the congruence class. In the case of algebraic polygraphs over $\poly{Mod}$, we set $\poly{P}''_2=\AC{+} \sqcup \AC{\cdot}$, and $\poly{P}'_2$ is the convergent presentation of $\poly{Mod}'_2$ modulo AC given in \eqref{SS:ConvergentPresentationRModModuloAC}.

\subsubsection{Example}
\label{SSS:ExampleString}
Consider the cartesian polygraph $P = \poly{Mon}$, a set $Q$ of constants, and a cellular extension $R$ of $\poly{P}_1 \langle Q \rangle$ as follows:
\begin{eqn}{equation}
\label{E:AlgebraicPolygraphB3+}
Q = \{s,t : \ob{0} \fl \ob{1}\},
\qquad
R=\{\; A \: : \: \mu(\mu(s,t),s) \dfl \mu(t,\mu(s,t))\;\}.
\end{eqn}
This data defines an algebraic polygraph $(\poly{P},Q,R)$.
For example, if we consider the context $\Gamma = \mu(\mu(s,\boxx),t)$, the rule $A$ induces the following rewriting step
\[
\Gamma[A] : 
\mu(\mu(s,\mu(\mu(s,t),s)),t) \dfl \mu(\mu(s,\mu(t,\mu(s,t)),t).
\]
The set $\poly{P}_2 \langle Q \rangle$ is defined by the associativity relations on ground terms on the constants $s$ and $t$. For instance, $\poly{P}_2 \langle Q \rangle$ contains the following ground $2$-cell:
\[
\mu(\mu(s,t),s) \dfl \mu(s,\mu(t,s)).
\]
For this algebraic polygraph over $\poly{Mon}$, we consider the positive strategy as in~\eqref{R:RemarkStrategies} with $\poly{P}'_2 = \varnothing$ and $\poly{P}''_2 = \poly{Ass}$, so that for every $\cl{f} \in \cl{P \langle Q \rangle}$ we have $\sigma(\cl{f}) = \pi^{-1}(\cl{f})$. In other words, $\sigma(\cl{f})$ is the set of all representatives of $\cl{f}$ modulo associativity. For example, if $\cl{f} = sts$, then $\sigma(\cl{f}) = \{ \mu(s,\mu(t,s)), \: \mu(\mu(s,t),s) \}.$

\subsubsection{Example}
\label{SSS:ExampleInverses}
As aforementioned, for algebraic theories with inverse operators we need positive strategies $\sigma$ such that $\sigma(\cl{f}) \ne \pi^{-1}(\cl{f})$. Consider the cartesian polygraph $ \poly{P}= \poly{Grp}$,  and $Q,R$ as defined in \eqref{E:AlgebraicPolygraphB3+}. 
There is a $R$-rewriting step of the form 
\[ \mu(\mu(\mu(s,t),s),s^-) \dfl \mu(\mu(t,\mu(s,t)),s^-). \]
The left hand side being algebraically equivalent to $\mu(s,t)$, this rewriting step yields a reduction \linebreak  $st \dfl tsts^-$ in the quotient algebraic system on $\cl{P \langle Q \rangle}$ defined in \eqref{SS:AlgebraicRewrSystem}, so that the latter cannot be terminating. For this reason, we have to consider a positive strategy for which this $R$-rewriting step is not positive. In \eqref{SSS:PositiveStrategiesGroups}, we define a positive strategy for algebraic polygraphs over $\poly{Grp}$, that is not defined with respect to normal forms of $\poly{P}$ as done in \eqref{R:RemarkStrategies}.
 
Consider the cartesian polygraph $P = \poly{Mod}^{\textsf{c}}$, and cellular extensions $Q,R$ as follows:
\[
Q = \{x,y : \ob{0} \fl m \},
\qquad
R=\{\, A \: : \: x \dfl y \,\}.
\]
There is a $R$-rewriting step $a : x+(-x) \dfl x+(-y)$ that projects onto a reduction $0 \dfl x-y$ in the quotient algebraic system on $\cl{P \langle Q \rangle}$. In this case, we choose the positive strategy $\sigma$ defined in \eqref{R:RemarkStrategies}, where the positive rewriting steps are those whose source is a normal form with respect to $\poly{Mod}'_2$ modulo $\poly{AC}$. Since $x+(-x)$ is not a normal form with respect to the set of $2$-cells of $\poly{Mod}'_2$, the rewriting step $a$ is not $\sigma$-positive.

Finally, let us note that whenever we work with a cartesian $2$-polygraph $\poly{P}$ that admits an inverse operator~$\iota$ and a neutral operator~$e$, then for every algebraic polygraph $(\poly{P},Q,R)$ and every rule $A$ in $R$, there is a $R$-rewriting step
\[
e \dfl \mu(A_-,\iota(A_+)).
\]
In order to make the quotient algebraic rewriting system on $\cl{P \langle Q \rangle}$ terminating, we need to consider a strategy $\sigma$ such that the above $2$-cell is not positive. Hence, we cannot have $\sigma(\cl{f}) = \pi^{-1} (\cl{f})$.

\subsection{Algebraic polygraphs modulo}
\label{SS:AlgebraicPolygraphModulo}

\subsubsection{Algebraic polygraph modulo}
Let $(\poly{P},Q,R)$ be an algebraic polygraph.
We denote by $\PRP$ the cellular extension of the theory $\poly{P}_1 \langle Q \rangle$
made of triple $(e,a,e')$, where $e,e'$ are $2$-cells in $\poly{P}_2\langle Q\rangle^\top$, and $a$ is a $R$-rewriting step such that $e_+=a_-$ and $a_+=e'_-$. Such a triple, also denoted by $e \star_1 a \star_1 e'$, is called a \emph{$\PRP$-rule}, and pictured by
\[
\xymatrix@R=4em@C=7em{
\ob{0}
  \ar@/^9ex/[r] ^-{e_-} _-{}="1"
  \ar@/^3ex/[r] |-{e_+} ^-{}="2"
  \ar@/_3ex/[r] |-{e'_-} ^-{}="3"
  \ar@/_9ex/[r] _-{e'_+} _-{}="4"
&
s
	\ar@2 "1"!<0pt,-8pt>;"2"!<0pt,8pt> ^-{e}
	\ar@2 "2"!<0pt,-8pt>;"3"!<0pt,8pt> ^-{a}
	\ar@2 "3"!<0pt,-8pt>;"4"!<0pt,8pt> ^-{e'}
}
\]
Given a positive strategy $\sigma$ on $\Pr$, a rule $(e,a,e')$ is \emph{$\sigma$-positive} if $a$ is a $\sigma$-positive $R$-rewriting step. 
An \emph{algebraic polygraph modulo} is a data $\Pr=(\poly{P},Q,R,S)$ made of 
\begin{enumerate}[{\bf i)}]
\item an algebraic polygraph $(\poly{P},Q,R)$,
\item a cellular extension $S$ of $\poly{P}_1\langle Q \rangle$ such that $R \subseteq S \subseteq \PRP$. 
\end{enumerate}
We say that $\Pr$ is \emph{terminating} (resp. \emph{quasi-terminating}) if the algebraic polygraph $(\poly{P},Q,S)$ is terminating (resp. quasi-terminating).

\subsubsection{Example}
Let us consider the algebraic polygraph $(\poly{P},Q,R)$ defined in~\eqref{E:AlgebraicPolygraphB3+}, then the following composition gives a rewriting step in $\PRP$:
\[ 
(s \cdot (s \cdot (t \cdot s))) \cdot t \equiv_{\poly{P}_2 \langle Q \rangle} (s\cdot ((s\cdot t)\cdot s)) \cdot t \overset{\Gamma [A]}{\dfl} (s\cdot (t \cdot (s\cdot t)) \cdot t \equiv_{\poly{P}_2 \langle Q \rangle} ((s \cdot t) \cdot (s \cdot t)) \cdot t.
\]

\subsubsection{Quasi-normal forms}
\label{SSS:QuasiNormalForms}
Let $\Pr=(\poly{P},Q,R,S)$ be an algebraic polygraph modulo. A $1$-cell $f$ of $\poly{P}_1\langle Q \rangle$ is \emph{quasi-irreducible} if for every $S$-rewriting step $f \dfl g$ there exists a $S$-rewriting path from $g$ to $f$.
A \emph{quasi-normal form (with respect to $\Pr$)} of a $1$-cell $f$ in $\poly{P}_1\langle Q \rangle$ is a quasi-irreducible $1$-cell $\widetilde{f}$ of $\poly{P}_1\langle Q\rangle$ such that there exists a $S$-rewriting path from $f$ to $\widetilde{f}$. If $\Pr$ is quasi-terminating, every $1$-cell $f$ of $\poly{P}_1\langle Q \rangle$ admits at least a quasi-normal form, that is neither $S$-irreducible nor unique in general. A \emph{quasi-normal form strategy} is a map 
\[
s: \poly{P}_1\langle Q \rangle \fl \poly{P}_1\langle Q \rangle
\]
sending a $1$-cell $f$ on a chosen quasi-normal $\widetilde{f}$.

\subsection{Algebraic rewriting systems}
\label{SS:AlgebraicRewrSystem}

\subsubsection{Algebraic rewriting systems}
Let $\Pr=(\poly{P},Q,R,S)$ be an algebraic polygraph modulo.
A cellular extension $S$ of $\poly{P}_1 \langle Q\rangle$ extends to a cellular extension of the theory $\cl{\poly{P} \langle Q \rangle}$, with source $\cl{\st}_1^- := \pi \circ \st_1^-$, and target $\cl{\st}_1^+ := \pi \circ \st_1^+$.
An \emph{algebraic rewriting system} on $\Pr$ is a cellular extension $\cl{S}$ of $\cl{\poly{P}\langle Q\rangle}$ defined in such a way that the following diagram commutes
\[
\xymatrix@C=5em@R=2.5em{
&
S
\ar@<+0.5ex>[dl] ^-{\cl{\st}_1^-}
\ar@<-0.5ex>[dl] _-{\cl{\st}_1^+}
\ar[d] ^-{\pi'}
\\
\cl{\poly{P}\langle Q \rangle}
&
\cl{S}
\ar@<+0.5ex>[l] ^-{}
\ar@<-0.5ex>[l] _-{}
}
\]
where the map $\pi '$ assigns to a $S$-rule $e \star_1 a \star_1 e'$ an element $\cl{a}$ in $\cl{S}$ with source $\cl{a}_-$ and target $\cl{a}_+$.
Since $R \subseteq S \subseteq \PRP$, note that the quotient cellular extensions $\cl{R}$ and $\cl{S}$ coincide.

Given a positive strategy $\sigma$ on $\Pr$, let define $ 
\cl{S}^\sigma := \{ \cl{a}\in \cl{S} \; | \; \text{$a$ is a $\sigma$-positive $S$-rule} \}
$. A \emph{$\cl{S}$-rewriting step} (resp. $\cl{S}^\sigma$-rewriting step) is the quotient of a $S$-rewriting step (resp. $\sigma$-positive $S$-rewriting step) by the canonical projection $\pi$, that is a $2$-cell of the form $\cl{\Gamma[a]}: \cl{\Gamma[a_-]} \dfl \cl{\Gamma[a_+]}$,
where $\Gamma$ is a ground context of $\poly{P}_1\langle Q \rangle$ and $\Gamma[a]$ is a $S$-rewriting step (resp. $\sigma$-positive $S$-rewriting step). A \emph{$\cl{S}$-rewriting path} (resp. $\cl{S}^\sigma$-rewriting path) is a sequence of $\cl{S}$-rewriting steps (resp. $\cl{S}^\sigma$-rewriting steps).

\subsubsection{Examples}
\label{SSS:SRS}
A \emph{string rewriting system} (SRS) is an algebraic rewriting system on an algebraic polygraph modulo $(\poly{Mon},Q,R,S)$. The set $Q$ is the alphabet of the SRS, and the quotient of the cellular extension $R$ with respect to the congruence $\equiv_{\poly{Mon}_2 \langle Q \rangle}$ is the set of rules of the SRS. For instance, as a quotient of  the algebraic polygraph defined in~\eqref{E:AlgebraicPolygraphB3+}, we obtain the SRS 
\[ \langle s,t \: | \: sts \dfl tst \: \rangle, \]
that presents the monoid $B_3^+$ of braids on $3$ strands.

A \emph{linear rewriting system} (LRS) is an algebraic rewriting system on an algebraic polygraph modulo $(\poly{P},Q,R,S)$ such that $\poly{Mod}^{\textsf{c}} \subseteq \poly{P}$.

\section{Confluence of algebraic polygraphs modulo}
\label{S:ConfluenceAlgebraicPolygraphsModulo}

In this section we study confluence properties of algebraic polygraphs modulo with respect to positive strategies. Here $\Pr=(\poly{P},Q,R,S)$ denotes an algebraic polygraph modulo, and $\sigma$ a positive strategy on $\Pr$.

\subsection{Confluence modulo with respect to a positive strategy}

\subsubsection{Branchings in algebraic polygraphs modulo}
A \emph{$\sigma$-branching} of $\Pr$ is a triple $(a,e,b)$, where $a,b$ are $\sigma$-positive $2$-cells of $\freetha{S}$ and $e$ is a $2$-cell of 
$\tck{\poly{P}_2 \langle Q \rangle}$ as in the following diagram
\[
\xymatrix @R=1.25em @C=2.75em{
f
  \ar[r] ^-{a} 
  \ar[d] _-{e}
& 
f'
\\
g
  \ar[r] _-{b} 
&
g'
}
\]

In the rest of this article, for a better readability of the diagrams, the $2$-cells will be represented by simple arrows.  The \emph{source} of a $\sigma$-branching $(a,e,b)$ is the pair of $1$-cells $(f,g)$, where $f = a_- = e_-$, and $g = b_- = e_+$. When $b$ (resp. $a$) is an identity $2$-cell, the $\sigma$-branching is written $(a,e)$ (resp. $(e,b)$). When $e$ is an identity $2$-cell, the $\sigma$-branching is written $(a,b)$. A $\sigma$-branching $(a,e,b)$ is \emph{local} if $\ell(a) = \ell(b) + \ell(e) = 1$, that is it is either of the form $(a,e)$ or $(a,b)$.

A $\sigma$-branching $(a,e,b)$ is \emph{$\sigma$-confluent modulo} if there exist $\sigma$-positive $S$-rewriting paths $a',b'$, and a $2$-cell $e'$ in $\tck{\poly{P}_2 \langle Q \rangle}$ as in the following diagram:
\[
\xymatrix @R=1.25em @C=2.75em{
f
  \ar[r] ^-{a}
  \ar[d] _-{e}
&
f' 
  \ar@{.>}[r] ^-{a'} 
& 
h
  \ar[d] ^-{e'}
\\
g
  \ar[r] _-{b}
&
g'
  \ar@{.>}[r] _-{b'} 
&
h'
}
\]
The triple $(a',e',b')$ is called a \emph{$\sigma$-confluence modulo} of the branching $(a,e,b)$. We say that $\Pr$ is \emph{$\sigma$-confluent modulo} (resp. \emph{locally $\sigma$-confluent modulo}) if every $\sigma$-branching modulo (resp. local $\sigma$-branching modulo) is $\sigma$-confluent modulo.

\begin{remark} 
As noted in \cite{BachmairDershowitz89}, the algebraic polygraph $R$ is the polygraph for which it is the most difficult to reach $\sigma$-confluence modulo. Indeed, if $R$ is confluent modulo $\poly{P}$, then every algebraic polygraph modulo $(\poly{P},Q,R,S)$ is confluent modulo $\poly{P}$. For this reason, in many situations we relax by proving $\sigma$-confluence of $\PR$ or $\PRP$ modulo $\poly{P}$. 
In \cite{BachmairDershowitz89}, it is also noticed that when $\PRP$ is terminating, $\RP$ is confluent modulo~$\poly{P}$ if and only if $\PRP$ is confluent modulo $\poly{P}$, and in that case $\RP$ defines the same set of normal forms than~$\PRP$. As a consequence, we will either prove $\sigma$-confluence of $\RP$ and $\PRP$, leading to the same quotient algebraic rewriting system. Note finally that when $\PR \subseteq S \subseteq \PRP$, every local $\sigma$-branching modulo of the form $(a,e)$ is trivially $\sigma$-confluent modulo via the $\sigma$-confluence modulo $(\id_{a_-}, e^- \star_1 a,\id_{a_+})$.
\end{remark}

\subsubsection{Rewrite order on an algebraic polygraph modulo}
\label{SSS:RewriteOrder}
Denote by $\preccurlyeq_{\Pr}$ the relation on the $1$-cells of $\poly{P}_1 \langle Q \rangle$ defined, for all $1$-cells $f,g$ in $\poly{P}_1 \langle Q \rangle$, by $g \preccurlyeq_{\Pr} f$ if $f=g$ or $f$ $S$-rewrites into $g$.
The \emph{rewrite order} of~$\Pr$, denoted by $\prec_{\Pr}$, is the strict order on $\poly{P}_1 \langle Q \rangle$ defined by $g \prec_{\Pr} f$ if $g \preccurlyeq_{\Pr} f$ but not $f \preccurlyeq_{\Pr} g$. Note that when $\Pr$ is quasi-terminating, the relation $\preccurlyeq_{\Pr}$ does not define an order when there exists two $1$-cells which rewrite into each other, but the relation $\prec_{\Pr}$ is a well-founded strict order.

\subsubsection{Double induction principle}
Let us recall from Huet \cite{Huet80} the double induction principle, that we apply to quasi-terminating algebraic polygraphs modulo. From $\Pr$, we construct an auxiliary algebraic polygraph $\Paux := (\poly{P} \times \poly{P}, Q, S^{\text{db}})$, where $\poly{P}\times\poly{P}$ is the cartesian product of the polygraph $\poly{P}$ by itself, and the cellular extension $S^\text{db}$ on $(\poly{P} \times \poly{P})_1 \langle Q \rangle := P_1 \langle Q \rangle \times P_1 \langle Q \rangle$ contains a $2$-cell $(f,g) \dfl (f',g')$, for all $1$-cells $f,f',g,g'$ in $\poly{P}_1 \langle Q \rangle$ in any of the following situations: 
\begin{enumerate}[{\bf i)}]
\itemsep0em
\item there exists a $2$-cell $f \dfl f'$ in $\freetha{S}$ and $g=g'$;
\item there exists a $2$-cell $g \dfl g'$ in $\freetha{S}$ and $f=f'$;
\item there exist $2$-cells $f \dfl f'$ and $f \dfl g'$ in $\freetha{S}$;
\item there exist $2$-cells $g \dfl f'$ and $g \dfl g'$ in $\freetha{S}$;
\item  there exist $2$-cells $e_1,e_2,e_3$ in $\tck{\poly{P}_2 \langle Q \rangle}$, such that $\ell(e_1) > \ell(e_3)$, and as in the following diagram
\[ 
\xymatrix{ 
f \ar@2 [r] ^-{e_1} 
& 
g 
\ar@2 [r] ^-{e_2} 
& 
f' 
\ar@2 [r] ^-{e_3} & g'.
} 
\]
\end{enumerate}
As a consequence of the definition, if there exist $2$-cells $f \dfl f'$ and $g \dfl g'$ in $\freetha{S}$, then there is a $2$-cell $(f,g) \dfl (f',g')$ in $\Paux$ given by the composition $ (f,g) \dfl (f',g) \dfl (f',g')$.
Following \cite[Prop. 2.2]{Huet80}, if $\Pr$ is terminating, then so is $\Paux$. This result extends as follows: if $\Pr$ is quasi-terminating, then so is $\Paux$. Indeed, termination cycles that come from quasi-termination of $\Pr$ also appear in $\Paux$, and these are the only infinite rewriting paths that can arise.
In the sequel, we will prove rewriting results using double induction on a quasi-terminating algebraic polygraph modulo $\Pr$, consisting in using well-founded induction on the rewrite order $\prec_{\Paux}$ defined in \eqref{SSS:RewriteOrder}.

\begin{theorem}
\label{T:NewmanLemmaModulo}
Let $\Pr$ be a quasi-terminating algebraic polygraph modulo, and $\sigma$ be a positive strategy on $\Pr$. If $\Pr$ is locally $\sigma$-confluent modulo, then it is $\sigma$-confluent modulo.
\end{theorem}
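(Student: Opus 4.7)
The proof adapts Huet's classical double induction argument for the Newman lemma to the quasi-terminating modulo setting. Given a $\sigma$-branching $(a, e, b)$ with source $(f, g) = (a_-, b_-)$, I would proceed by Noetherian induction on the well-founded order $\prec$ of Section~\ref{SSS:InductionQNFModulo}, where $(a, e, b) \prec (a', e', b')$ iff $d(a_-) + d(b_-) < d(a'_-) + d(b'_-)$. Existence of quasi-normal forms and well-definedness of $d$ are guaranteed by quasi-termination. Degenerate cases where $e$ is trivial or one of $a, b$ is an identity are handled directly: a branching of the form $(a, e)$ can be taken confluent via the completion $(1_{a_-}, e^- \star_1 a, 1_{a_+})$, as noted in the remark preceding the statement; when $e$ is trivial, the branching reduces to a usual $\sigma$-branching, to which the argument below applies with minor modifications.

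For the nondegenerate inductive step, decompose $a = a_1 \star_1 \rho_a$ and $b = b_1 \star_1 \rho_b$ into a first rewriting step plus a remainder. This produces a local $\sigma$-branching modulo $(a_1, e, b_1)$, for which the hypothesis of local $\sigma$-confluence modulo yields $\sigma$-positive paths $a_1', b_1'$ in $\freetha{S}$ and a $2$-cell $e'$ in $\tck{P_2 \langle Q \rangle}$ completing the diagram to a common pair of terms $(h_1, h_1')$ with $h_1 \equiv_{P_2} h_1'$. Form two auxiliary $\sigma$-branchings from the resulting data: one comparing $\rho_a$ with $a_1'$ (both issuing from the target of $a_1$), and another comparing $b_1'$ with $\rho_b$ (both issuing from the target of $b_1$), each equipped with a trivial $P_2$-equivalence. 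Apply the induction hypothesis to each to obtain $\sigma$-confluences modulo, and paste the three resulting confluence diagrams together with $e'$, using horizontal composition in $\tck{P_2\langle Q\rangle}$, to yield a $\sigma$-confluence modulo of the original branching $(a, e, b)$.

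The main obstacle is ensuring that the auxiliary branchings really do have strictly smaller measure under $\prec$. In the classical terminating setting this is automatic because a rewriting step strictly decreases the termination order, but here a single step $f \to f_1$ need not decrease $d(f)$. The resolution combines the definition of $d$ via minimal $\PRP$-paths to the chosen quasi-normal form with $\sigma$-positivity of the paths produced by local confluence: one verifies that at each endpoint appearing in the assembled diagram, the distance to the quasi-normal form is strictly smaller than the larger of $d(f)$ and $d(g)$, so that the sums of distances of the auxiliary branchings descend strictly along $\prec$. Carrying out this bookkeeping carefully, and uniformly handling the various degenerate branching shapes, constitutes the technical heart of the proof and parallels Huet's original treatment of the terminating modulo case in~\cite{Huet80}.
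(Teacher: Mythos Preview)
Your overall strategy—Noetherian induction on the order $\prec$ of Section~\ref{SSS:InductionQNFModulo}—is the same as the paper's, but the reduction to the local hypothesis has a genuine gap.

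The branching $(a_1, e, b_1)$ that you form from the first steps of $a$ and $b$ is \emph{not} a local $\sigma$-branching modulo: by the paper's definition a local branching requires $a_1$ to be a single step \emph{and} $\ell(e) + \ell(b_1) = 1$. Since $b_1$ already has length~$1$, this forces $e$ to be trivial, which is precisely the case you have set aside. Hence local $\sigma$-confluence cannot be invoked on $(a_1, e, b_1)$ when $e$ is nontrivial.

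Relatedly, your handling of the degenerate $(a,e)$ case is incorrect. The remark you cite supplies the trivial confluence only under the additional hypothesis $\PR \subseteq S$, which the theorem does not assume, and only for \emph{local} branchings with $\ell(e)=1$. For general $R \subseteq S \subseteq \PRP$ and $e$ of arbitrary length, the composite $e^{-} \star a$ need not be an $S$-rewriting path at all, let alone a $\sigma$-positive one.

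The paper closes both gaps with a preliminary Step~1: it proves, by a subsidiary induction on $\ell(e)$ interleaved with the main induction on $\prec$, that every branching $(a,e)$ with $a$ a single $S$-step and $e$ of arbitrary length is $\sigma$-confluent modulo. Only then, in Step~2, is a general $(a,e,b)$ attacked: one applies Step~1 to $(a_1,e)$ to obtain a confluence whose lower leg is a genuine $S$-path $c_1$ out of $g$; the local hypothesis is then used on the pair $(c_1^1,b_1)$ (now truly local, since $e$ has been absorbed), and the remaining pieces are glued by repeated descent along $\prec$. This two-stage architecture—first induct on $\ell(e)$ to eliminate the modulo part, then run the Huet-style double induction—is the missing ingredient in your argument.
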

\begin{proof}
Let $\Pr$ be locally $\sigma$-confluent modulo. We prove the result by well-founded induction with respect to the order $\prec_{\Paux}$. Let $(a,e,b)$ be a $\sigma$-branching modulo of $\Pr$ with source $(f,g)$. Suppose that for every $\sigma$-branching modulo $(a',e',b')$ with source $(f',g')$ such that there is a $2$-cell $ (f,g) \dfl (f',g')$ in $\freetha{(S^\text{db})}$, 
the $\sigma$-branching modulo $(a',e',b')$ is confluent modulo. We proceed in two steps.

\textbf{Step 1:} First, we prove that every $\sigma$-branching modulo $(a,e)$ with source $(f,g)$, where $a$ is a $\sigma$-positive $S$-rewriting step and $e$ is a $2$-cell in $\tck{\poly{P}_2 \langle Q \rangle}$, is $\sigma$-confluent modulo. We proceed by induction on $\ell(e) \geq 1$. If $\ell(e) = 1$, $(a,e)$ is local, hence it is $\sigma$-confluent modulo by assumption.
Now, assume that for $k \geq 1$, every $\sigma$-branching modulo $(a'',e'')$, such that $a''$ is a $\sigma$-positive $S$-rewriting step and $\ell(e'') = k$ is $\sigma$-confluent modulo, and consider a $\sigma$-branching modulo $(a,e)$ such that $\ell(e) = k+1$. We write $e = e_1 \star e_2$ with $e_1$ of length $1$. By local $\sigma$-confluence of the $\sigma$-branching modulo $(a,e_1)$, there exists a $\sigma$-confluence modulo $(a',e'_1,a_1)$ of this $\sigma$-branching. We write $a_1 = a_1^1 \star a_1^2$ with $a_1^1$ of length $1$ and $\ell(a_1^2) \geq 0$. By induction hypothesis on the $\sigma$-branching modulo $(a_1^1,e_2)$, there exists a $\sigma$-confluence modulo $(a'_1,e'_2,b)$ as in the following diagram:
\[
\xymatrix@R=2.25em @C=4em{ 
f \ar [d] _-{e_1} \ar [r] ^-{a} & f' \ar [r] ^-{a'} & f'' \ar [d] ^-{e'_1} \\
f_1 \ar [d] _-{\rotatebox{90}{=}} \ar [r] |-{a_1^1} ^-{}="2" & f'_1 \ar [d] ^-{\rotatebox{90}{=}} \ar [r] |-{a_1^2} & f''_1  \\
f_1 \ar [r] |-{a_1^1} _-{}="3" \ar [d] _ -{e_2} & f'_1 \ar [r] |-{a'_1} & f'_2 \ar [d] ^-{e'_2} \\
g \ar [rr] _-{b} ^-{}="1" & & g' 
\ar@2{} "1,2";"2,2" |{\textit{Local $\sigma$-conf mod}}
\ar@2{} "3,2";"1" |{\textit{Induction on $\ell(e)$}}
\ar@2{} "2";"3" |{=}
} 
\]
Now, since $\ell(e_1) = 1$ and $\ell(e_2) \geq 1$, we have the following rewriting path in $\Paux$:
\[
(f,g)  \dfl (f_1, g) \dfl (f_1,f_1) \dfl (f_1,f'_1) \dfl (f'_1,f'_1).
\]
We apply the double induction on the $\sigma$-branching $(a_1^2, a_1')$ with source $(f'_1,f'_1)$ to prove the existence of a $\sigma$-confluence modulo $(a_2,e_3,a'_2)$. By a similar argument, we use double induction on the $\sigma$-branchings modulo $(e'_1,a_2)$ and $(a'_2, e'_2)$ with respective sources $(f'',f_1'')$ and $(f'_2,g')$. Therefore, there exist $2$-cells $a''$,$a_3$, $a'_3$,  $b'$ in $\freetha{S}$ and $2$-cells $e''_1$, $e''_2$ in $\tck{\poly{P}_2 \langle Q \rangle}$ as in the following diagram:
\[
\xymatrix@R=2.25em@C=4.3em{ 
f \ar [d] _-{e_1} \ar [r] ^-{a} & f' \ar [r] ^-{a'} & f'' \ar [d] ^-{e'_1} \ar [rr]^-{a''} ^-{}="4" & & f''' \ar [d] ^-{e''_1} \\
f_1 \ar [d] _-{\rotatebox{90}{=}} \ar [r] |-{a_1^1} ^-{}="2" & f'_1 \ar [d] ^-{\rotatebox{90}{=}} \ar [r] |-{a_1^2} & f''_1 \ar [r] |-{a_2}  & h_1 \ar [r] |-{a_3} \ar [d] ^-{e_3} & h'_1  \\
f_1 \ar [r] |-{a_1^1} _ -{}="3" \ar [d] _ -{e_2} & f'_1 \ar [r] |-{a'_1} & f'_2 \ar [d] ^-{e'_2} \ar [r] |-{a'_2}  & h_2 \ar [r] |-{a'_3} & h'_2  \ar [d] ^-{e''_2} \\
g \ar [rr] _-{b} ^-{}="1" & & g' \ar [rr] _ -{b'} _-{}="5" & & g'' 
\ar@2{} "1,2";"2,2" |{\textit{Local $\sigma$-conf. mod}}
\ar@2{} "3,2";"1" |{\textit{Induction on $\ell(e)$}} 
\ar@2{} "2";"3" |{=}  
\ar@2{} "2,3";"3,3" |{\textit{Double Induction}}
\ar@2{}  "4";"2,4" |{\textit{Double Induction}}
\ar@2{}   "3,4";"5" |{\textit{Double Induction}} 
} 
\]   
Finally, we use once again double induction on the $\sigma$-branching modulo $(a_3,e_3,a'_3)$ of source $(h_1,h_2)$, satisfying $(h_1,h_2) \prec_{\Paux} (f,g)$, and repeat this process. Since the order $\prec_{\Paux}$ is well-founded, it terminates in finitely many steps until we reach quasi-normal forms $\widetilde{f}$ and $\widetilde{g}$ of $f$ and $g$ respectively. This yields the $\sigma$-confluence of the $\sigma$-branching $(a,e)$.

\medskip
\textbf{Step 2:}
Now, we prove that every $\sigma$-branching modulo $(a,e,b)$ with source $(f,g)$ is $\sigma$-confluent modulo. Suppose that  every $\sigma$-branching $(a',e',b')$ modulo with source $(f',g')$ such that there is a $2$-cell $(f,g) \dfl (f',g')$ in $\freetha{(S^\text{db})}$ is $\sigma$-confluent modulo. We use the proof scheme of \cite[Lemma 2.7]{Huet80}. Let us denote by $n := \ell(a)$ and $m:= \ell(b)$.
If both $m$ and $n$ are $0$, there is no branching modulo, so that we assume without loss of generality that $n > 0$. We write $a = a_1 \star_{1} a_2$ with $a_1$ of length $1$.

If $m = 0$, by Step 1 on the $\sigma$-branching modulo $(a_1,e)$, there exists a $\sigma$-confluence modulo $(a'_1,e',b')$ of this $\sigma$-branching. Then, we use double induction on the $\sigma$-branching modulo $(a_2,a'_1)$ with source $(f_1,f_1)$, since there is a rewriting path in $\freetha{(S^\text{db})}$ of the form 
\[ (f,g) \dfl (f,f) \dfl (f,f_1) \dfl (f,f_1). \]
There exist $\sigma$-positive $2$-cells $a'_2$, $a''_1$ in $\freetha{S}$ and a $2$-cell $e''$ in $\tck{\poly{P}_2 \langle Q \rangle}$ as follows:
\[
\xymatrix@R=2.25em @C=4.3em{ 
f \ar [r] ^-{a_1} ^-{}="2" \ar [d] _-{\rotatebox{90}{=}} & f_1 \ar [r] ^-{a_2} \ar [d] ^-{\rotatebox{90}{=}} & f_2 \ar [r]^-{a'_2} & f'_2 \ar [d] ^-{e''}\\
f \ar [d] _ -{e} \ar [r] |-{a_1} _ -{}="3" & f_1 \ar [r] |-{a'_1} & f_2 \ar [r] |-{a''_1} \ar [d] ^-{e'} & f'_2 \\
g \ar [rr] _ -{b'} ^-{}="1" & & g' & 
\ar@2{} "2,2";"1" |{\textit{Step 1}} \ar@2{} "2";"3" |{=}
\ar@2{} "1,3";"2,3" |{\textit{Double Induction}} } \]
We conclude the proof of this case with a similar argument as in Step 1, using repeated double inductions terminating after a finite number of steps by well-foundedness of the order $\prec_{\Paux}$.

Now, assume that $m > 0$ and write $b = b_1 \star_{1} b_2$ with $b_1$ of length $1$. By Step 1 on the $\sigma$-branching modulo $(a_1,e)$, there exists a $\sigma$-confluence modulo $(a'_1,e_1,c_1)$ of this $\sigma$-branching. We distinguish two cases whether $c_1$ is trivial or not. 

If $c_1$ is trivial, the $\sigma$-confluence of $(a,e,b)$ is obtained from the following diagram
\[
\xymatrix@R=2.25em@C=4.3em {
f \ar [d] _ -{\rotatebox{90}{=}} \ar [r] ^-{a_1} ^-{}="11" & f_1 \ar [d] ^-{\rotatebox{90}{=}} \ar [r] ^-{a_2} & f_2 \ar [rrr] ^-{a'_2} & & & f'_2 \ar [d] ^-{}    & \\
f \ar [r] |-{a_1} _-{}="12" \ar [d] _ -{e} & f_1 \ar [r] |-{a'_1} & f'_1 \ar [rr] |-{a_3} ^-{}="1" \ar [d] ^-{e'}  &  & f_3 \ar [d] ^-{e_1} \ar [r] ^-{a_4} ^-{}="8" & f_4 \ar [r] |-{a_5} & f_5 \ar [d] ^-{} \\
g \ar [d] _ -{\rotatebox{90}{=}} \ar [rr] |-{\id_g} ^-{}="2" & & g \ar [d] _-{\rotatebox{90}{=}} \ar [r] |-{b_1} ^-{}="5" & g'_1 \ar [d] ^-{\rotatebox{90}{=}} \ar [r] |-{b'_1} & g''_1 \ar [r] |-{b''_1} & h_1 \ar [d] ^-{} \ar [r] ^{b_3} & h_3 \\
g \ar [rr] _-{\id_g} _-{}="3" & & g \ar[r] |-{b_1} _-{}="6" & g'_1 \ar [r] _-{b_2} & g_2 \ar [r] _ -{b'_2} & h_2 &  
\ar@2{} "3,4";"2,4" |{\textit{Step 1}} 
\ar@2{} "2";"2,2" |{\textit{Step 1}}
\ar@2{} "11";"12" |{=} 
\ar@2{} "2";"3" |{=} 
\ar@2{} "5";"6" |{=} 
\ar@2{} "1,3";"2,3"!<75pt,0pt> |{\textit{Double Induction}}
\ar@2{} "2,6";"3,6" |{\textit{Double Induction}}
\ar@2{} "3,5";"4,5" |{\textit{Double Induction}}
} \]
where the $\sigma$-branchings modulo $(a_1,e)$ and $(b_1,e')$ are $\sigma$-confluent modulo by Step 1, and double induction applies on the $\sigma$-branchings $(a_2,a'_1 \star_{1} a_3)$, $(b'_1,b_2)$ and $(a_4,e_1,b''_1)$ of respective sources $(f_1,f_1)$, $(g'_1,g'_1)$ and $(f_3,g''_1)$ which are all strictly smaller than $(f,g)$ for $\prec_{\Paux}$. We then reach a $\sigma$-confluence modulo of the $\sigma$-branching modulo $(a,e,b)$ similarly using repeated double inductions.
\medskip

If $c_1$ is not trivial, write $c_1 = c_1^1 \star_{1} c_1^2$ with $c_1^1$ of length $1$. The $\sigma$-confluence of the $\sigma$-branching modulo $(a,e,b)$ is obtained from the following diagram:

\[ \xymatrix@R=2.25em@C=5.3em {
f \ar [d] _-{\rotatebox{90}{=}} \ar [r] ^-{a_1} ^-{}="1" & f_1 \ar [d] ^-{\rotatebox{90}{=}}  \ar [r] ^-{a_2} & f_2 \ar [r] ^-{a'_2} & f'_2 \ar [d] ^-{} & \\
f \ar [d] _ -{e} \ar [r] |-{a_1} _-{}="2" & f_1 \ar [r] |-{a'_1} & f'_1 \ar [d] ^-{} \ar [r]|-{a_3} & f_3 \ar [r] |-{a_4} & f_4 \ar [d] ^-{} \\
g \ar [d] _-{\rotatebox{90}{=}} \ar [r] |-{c_1^1} ^-{}="5" & g_1 \ar [d] ^-{\rotatebox{90}{=}} \ar [r] |-{c_1^2} & h_1 \ar [r] |-{c_2} & h_2 \ar [d] ^-{} \ar [r] |-{c'_2} & h'_2 \\
g \ar [d] _-{\rotatebox{90}{=}} \ar [r] |-{c_1^1} _-{}="6" &  g_1 \ar [r] |-{c'_1} & h'_1 \ar [d] ^-{} \ar [r] |-{c_3} & h_3 \ar [r]  |-{c'_3} & h'_3 \ar [d] ^-{} \\
g  \ar [d] _-{\rotatebox{90}{=}} \ar [r] |-{b_1} ^-{}="3" & g' \ar [d] ^-{\rotatebox{90}{=}} \ar [r] |-{b'_1} & g'_1 \ar [r] |-{b'_2} & g'_2 \ar [r] |-{b'_3} \ar [d] ^-{} & g'_3 \\
g \ar [r] _-{b_1} _-{}="4" & g' \ar [r] _-{b_2} & g_2 \ar [r] _-{b_3} & g_3 & 
\ar@2{} "1";"2" |{=} 
\ar@2{} "3";"4" |{=}  
\ar@2{} "5";"6" |{=}
\ar@2{} "2,2";"3,2" |{\textit{Step 1}} 
\ar@2{} "4,2";"5,2" |{\textit{Local $\sigma$-conf mod}}
\ar@2{} "1,3";"2,3" |{\textit{Double Induction}}
\ar@2{} "2,4";"3,4" |{\textit{Double Induction}}
\ar@2{} "3,3";"4,3" |{\textit{Double Induction}}
\ar@2{} "5,4";"4,4" |{\textit{Double Induction}}
\ar@2{} "5,3";"6,3" |{\textit{Double Induction}}
 } 
 \]
where the $\sigma$-branching modulo $(a_1,e)$ is confluent modulo by Step 1, the $\sigma$-branching modulo $(c_1^1,b_1)$ is $\sigma$-confluent by local $\sigma$-confluence modulo, and we check that double induction applies on the $\sigma$-branchings $(a_2,a'_1)$, $(c_1^2,c'_1)$, $(b'_1,b_2)$, $(a_3,c_2)$ and $(c_3,b'_2)$ of respective sources $(f_1,f_1)$, $(g_1,g_1)$, $(g',g')$ and $(f'_1,h_1)$ and $(h'_1,g'_1)$ which are all strictly smaller than $(f,g)$ for $\prec_{\Paux}$. Similarly, we can repeat inductions to reach a $\sigma$-confluence modulo of $(a,e,b)$.
\end{proof}

\subsection{Critical $\sigma$-branchings modulo}

\subsubsection{Classification of local $\sigma$-branchings}
\label{SSS:ConfluenceTerminationAlgebraicPolygraphsModulo}
The local $\sigma$-branchings modulo of $\Pr$ can be classified in the following families: 
\begin{enumerate}[{\bf i)}]
\item \emph{trivial} $\sigma$-branchings of the form
\begin{equation*}
\xymatrix@R=1.25em @C=3.5em{
\Gamma[a_-] \ar[d] _-{\rotatebox{90}{=}}
                \ar[r] ^-{\Gamma[a]} & \Gamma[a_+] \\
               \Gamma[a_-]
                \ar[r] _-{\Gamma[a]} & \Gamma[a_+]}
\end{equation*}
for all ground context $\Gamma$ and $\sigma$-positive $S$-rewriting step $a$.
\item \emph{orthogonal} $\sigma$-branchings modulo of the form
\[
\xymatrix@R=1.25em@C=4em{
\Delta[a_-,b_-] \ar[d] _-{\rotatebox{90}{=}}
                \ar[r] ^-{\Delta[a, b_-]} & \Delta[a_+,b_-] \\
              \Delta[a_-,b_-]
                 \ar[r] _-{\Delta[a_-, b]} & \Delta[a_-,b_+] }
\]
\[
\xymatrix@R=1.5em@C=4em{
\Delta[a_-,e_-] 
                 \ar[d] _-{\Delta[a_-, e]}
                \ar[r] ^-{\Delta[a, e_-]} & \Delta[a_+,e_-] \\
\Delta[a_-,e_+]  & }
\qquad
 \xymatrix@R=1.5em@C=3.5em{
\Delta'[e'_-,b_-] 
                 \ar[d] _-{\Delta'[e', b_-]}
                \ar[r] ^-{\Delta'[e'_-, b]} & \Delta'[e'_-,b_+] \\
\Delta'[e'_+,b_-]  & }       
\]
for all ground multi-contexts $\Delta$, $\Delta'$, $\sigma$-positive $S$-rewriting steps $a$, $b$, $c$, and $2$-cells $e$, $e'$ in $\tck{\poly{P}_2 \langle Q \rangle}$ of length $1$.
\item \emph{overlapping} $\sigma$-branchings are the remaining local $\sigma$-branchings. These branchings can be classified into two families: \emph{inclusion} $\sigma$-branchings of the form
\begin{equation*}
\label{E:InclusionIndependantbranchingsModulo}
\xymatrix@R=1.25em@C=4em{
\Gamma[a_-] \ar[d] _-{\rotatebox{90}{=}}
                \ar[r] ^-{\Gamma[a]} 
                & \Gamma[a_+] \\
 \Gamma [\Gamma'[b_-]] 
                 \ar[r] _-{\Gamma[\Gamma'[b]]}
                  & \Gamma[\Gamma'[b_+]]}
\end{equation*}
for all ground contexts $\Gamma$, $\Gamma'$, and $\sigma$-positive $S$-rewriting steps $a$, $b$, and \emph{regular overlapping} $\sigma$-branchings of the form
\begin{equation*}
\xymatrix@R=1.25em@C=4em{
\Gamma[a_-] \ar[d] _-{\rotatebox{90}{=}}
                \ar[r] ^-{\Gamma[a]} 
                & \Gamma[a_+] \\
\Lambda[b_-] 
                 \ar[r] _-{\Lambda[b]}
                  & \Lambda[b_+]}
\end{equation*}
for all ground contexts $\Gamma$, $\Lambda$, and $\sigma$-positive $S$-rewriting steps $a,b$ such that $(\Gamma[a],\Lambda[b])$ is not trivial, not orthogonal and not an inclusion branching. These branchings also admit their modulo counterpart, as in case $\mathbf{ii)}$, obtained by replacing the bottom $S$-rewriting step $b$ by a vertical $2$-cell $e$ in $\tck{\poly{P}_2 \langle Q \rangle}$ of length $1$.
\end{enumerate}

\subsubsection{Critical $\sigma$-branchings}
We define an order relation on $\sigma$-branchings modulo of $\Pr$ by setting $(a,e,b)\sqsubseteq (a',e',b')$ if there exists a ground context $\Gamma$ of $\poly{P}_1 \langle Q \rangle$ such that $a' = \Gamma[a]$, $e' = \Gamma[e]$ and $b' = \Gamma[b]$. A \emph{critical $\sigma$-branching modulo} is an overlapping $\sigma$-branching modulo that is minimal for the order relation $\sqsubseteq$. 

\subsubsection{Positive confluence}
We say that $\Pr$ is \emph{positively $\sigma$-confluent} if, for every $S$-rewriting step $a$, there exists $\widetilde{a_-} \in \sigma(a_-)$ and two $\sigma$-positive $S$-rewriting paths $a'$ $b'$ of length at most $1$ as in the following diagram
\[
\xymatrix@R=1.5em @C=2.25em{
\widetilde{a_-}
  \ar[rr] ^-{a'}
  \ar[d] _-{e}
&& f'
 \ar[d] ^-{\rotatebox{90}{=}}
\\
a_-
  \ar[r] _-{a}
& f
  \ar[r] _-{b'}
& f'
}
\]
where $e$ is a $2$-cell in $\tck{\poly{P}_2 \langle Q \rangle}$.
In that case, we say that $\sigma$ is a \emph{positive confluent strategy} for $\Pr$. 

\begin{proposition}
\label{P:TerminatingCriticalBranchingTheoremModulo}
Let $\Pr$ be a quasi-terminating algebraic polygraph modulo, and $\sigma$ be a positive strategy on $\Pr$. If $\Pr$ is positively $\sigma$-confluent, then it is locally $\sigma$-confluent modulo if, and only if, both of the following conditions are satisfied:
\begin{description}
\item[$\mathbf{a_0)}$] every critical $\sigma$-branching modulo $(a,b)$, where $a,b$ are $S$-rewriting steps, is $\sigma$-confluent modulo,
\item[$\mathbf{b_0)}$] every critical $\sigma$-branching modulo $(a,e)$, where $a$ is an $S$-rewriting step and $e$ is a $2$-cell in $\tck{\poly{P}_2 \langle Q \rangle}$ of length $1$, is $\sigma$-confluent modulo.
\end{description}
\end{proposition}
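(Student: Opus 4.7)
The forward direction is immediate: each critical $\sigma$-branching modulo is, in particular, a local $\sigma$-branching modulo, so local $\sigma$-confluence modulo entails $(a_0)$ and $(b_0)$.

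For the converse, I would proceed by case analysis on local $\sigma$-branchings modulo, following the four-way classification recalled in Section~\ref{SSS:ConfluenceTerminationAlgebraicPolygraphsModulo}. \emph{Trivial} branchings are confluent modulo via the identity pair. For \emph{orthogonal} branchings of the form $(B[a,b_-], B[a_-,b])$, I close the diagram at $B[a_+,b_+]$ using the exchange axiom in the free $2$-theory $\freetha{S}$, which identifies $B[a_+,b] \star_1 B[a_+,b_+]$-completions with $B[a,b_+]$. The two mixed orthogonal variants involving an equivalence $e$ in $\tck{P_2\langle Q \rangle}$ are handled analogously: the exchange axiom lets me transport $e$ across the rewriting step $a$ (or $b$). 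For \emph{inclusion independent} branchings $(A[a], A[A'[b]])$, the redex $A'[b_-]$ sits inside a variable slot of $a$'s pattern, so applying $a$ first possibly duplicates, erases, or preserves the occurrences of $A'[b_-]$ in $a_+$, and I close the diagram by reducing $b$ in each surviving copy, and symmetrically on the other branch.

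The main case, and the main obstacle, is the \emph{non-orthogonal} one. By minimality of critical branchings modulo for the order $\sqsubseteq$, every non-orthogonal local $\sigma$-branching modulo $(a,e,b)$ factors as $A[(c,f,d)]$ for some ground context $A$ of $P_1\langle Q\rangle$ and some critical $\sigma$-branching modulo $(c,f,d)$. Hypothesis $(a_0)$ (when $f$ is trivial) or $(b_0)$ (when $d$ is trivial and $\ell(f)=1$) provides a $\sigma$-confluence $(c',f',d')$ of $(c,f,d)$. Applying the context $A$ yields a candidate closure $(A[c'], A[f'], A[d'])$ of $(a,e,b)$, whose middle $2$-cell $A[f']$ still lies in $\tck{P_2\langle Q\rangle}$, but whose rewriting paths $A[c']$ and $A[d']$ are not automatically $\sigma$-positive, since $\sigma$ is sensitive to the ambient context $A$.

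This is precisely where the positive $\sigma$-confluence hypothesis intervenes. For each non-$\sigma$-positive rewriting step appearing in the decompositions of $A[c']$ and $A[d']$, I would invoke positive $\sigma$-confluence to replace it by a $\sigma$-positive rewriting step issued from a chosen representative in $\sigma$, closing the resulting square modulo $P_2$ via $2$-cells of $\tck{P_2\langle Q\rangle}$. Iterating this repair along the rewriting paths produces $\sigma$-positive completions, and quasi-termination of $(P,Q,R,S)$ ensures that this iteration does not spiral into infinite reductions and stabilizes after finitely many steps. The resulting diagram is a $\sigma$-confluence modulo of $(a,e,b)$, which establishes local $\sigma$-confluence modulo. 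The hard part is organizing the positivity repair coherently along an arbitrary decomposition of $A[c']$ and $A[d']$ while keeping the equivalence $2$-cells in $\tck{P_2\langle Q\rangle}$ consistent on both sides of the diagram.
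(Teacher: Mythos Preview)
Your overall architecture matches the paper's, but there is a genuine gap in your treatment of the orthogonal (and, by the same token, the inclusion-independent) case. You close an orthogonal $\sigma$-branching $(B[a,b_-],B[a_-,b])$ at $B[a_+,b_+]$ via the exchange relation, using the completions $B[a_+,b]$ and $B[a,b_+]$. The issue is that a $\sigma$-confluence modulo requires these completing paths to be $\sigma$-\emph{positive}, and nothing guarantees this: positivity of $a$ and $b$ is a property of their sources $B[a_-,b_-]$, not of $B[a_+,b_-]$ or $B[a_-,b_+]$. Since $\sigma$ is context-sensitive, the step $B[a_+,b]$ may well have $B[a_+,b_-]\notin\sigma(\cl{B[a_+,b_-]})$. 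The same objection applies to the ``reduce $b$ in each surviving copy'' step you propose for inclusion-independent branchings, and to the mixed orthogonal variants with an equivalence leg.

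In the paper this is exactly where positive $\sigma$-confluence and quasi-termination are first used, not only in the non-orthogonal case. When $B[a_+,b]$ (or $B[a,b_+]$) fails to be $\sigma$-positive, one invokes positive $\sigma$-confluence to replace it by a $\sigma$-positive step issued from a representative in $\sigma(\cl{B[a_+,b_-]})$, producing a smaller $\sigma$-branching modulo (for the well-founded order $\prec$ built from the distance $d$ to quasi-normal forms), and one closes by induction. This is precisely the content of the remark following the proposition: with a \emph{full} positive strategy the natural exchange closures are automatically $\sigma$-positive and quasi-termination is not needed; with a general $\sigma$ it is. So your positivity-repair mechanism is the right idea, but it must already be deployed at the orthogonal stage, and organized as a well-founded induction on $\prec$ rather than as a loose ``iteration''.
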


\begin{proof}  
One of the two implications is trivial. Suppose that condition $\mathbf{a_0)}$ holds, and prove that every local branching of the form $(a,b)$, where $a$, $b$ are $\sigma$-positive $S$-rewriting steps, is $\sigma$-confluence modulo. The proof that condition $\mathbf{b_0)}$ implies that every local branching of the form $(a,e)$, where $a$ is a $\sigma$-positive $S$-rewriting step and $e$ is a $2$-cell of $\tck{\poly{P}_2 \langle Q \rangle}$ of length $1$, is $\sigma$-confluent modulo is similar.

The proof is based on the analysis of all the possible cases of local $\sigma$-branchings modulo given in~\eqref{SSS:ConfluenceTerminationAlgebraicPolygraphsModulo}. Local trivial $\sigma$-branchings are always $\sigma$-confluent modulo. We consider a local orthogonal $\sigma$-branching modulo of the form 
\[ 
\xymatrix@R=1.7em@C=4em{
\Delta[a_-,b_-]
                 \ar[d] _-{\rotatebox{90}{=}}
                \ar[r] ^-{\Delta[a,b_-]} & \Delta[a_+,b_-]
               \\
 \Delta[a_-,b_-] \ar[r] _-{\Delta[a_-,b]}  &  \Delta[a_-,b_+] } 
 \]
 where $\Delta[a,b_-]$ and $\Delta[a_-,b]$ are $\sigma$-positive $S$-rewriting paths. There exist $2$-cells of $\freetha{S}$ as the dotted cells in the following diagram:
 \[ 
\xymatrix@R=1.7em@C=4em{
\Delta[a_-,b_-]
                 \ar[d] _-{\rotatebox{90}{=}}
                \ar[r] ^-{\Delta[a,b_-]} & \Delta[a_+,b_-] \ar@{.>} [r] ^-{\Delta[a_+,b]} & \Delta[a_+,b_+] \ar [d] ^-{\rotatebox{90}{=}}
               \\
 \Delta[a_-,b_-] \ar[r] _-{\Delta[a_-,b]}  &  \Delta[a_-,b_+]  \ar@{.>} [r] _-{\Delta[a,b_+]} & \Delta[a_+,b_+] } 
 \]
However, they are generally not $\sigma$-positive. Assume that they are both not $\sigma$-positive. By positive $\sigma$-confluence assumption, there exist a representative $1$-cell $\widetilde{\Delta[a_+,b_-]}$ (resp. $\widetilde{\Delta[a_-,b_+]}$) of $\Delta[a_+,b_-]$ (resp. $\Delta[a_-,b_+]$) in $\poly{P}_1 \langle Q \rangle$, $\sigma$-positive $S$-rewriting paths $c_1$, $c_2$, $d_1$, $d_2$, and $2$-cells $e_1$, $e_2$ in $\tck{\poly{P}_2 \langle Q \rangle}$ as in the following diagram:
\[ 
\xymatrix@R=1.7em@C=5em{ 
& \widetilde{\Delta[a_+,b_-]}  \ar [rr] ^-{c_1} & & \ar [d] ^-{\rotatebox{90}{=}} f
\\
\Delta[a_-,b_-]
                 \ar[d] _-{\rotatebox{90}{=}}
                \ar[r] ^-{\Delta[a,b_-]} & \Delta[a_+,b_-] \ar [u] ^-{e_1} \ar@{.>} [r] ^-{\Delta[a_+,b]} & \Delta[a_+,b_+] \ar [d] ^-{\rotatebox{90}{=}} \ar [r] |-{d_1} & f
               \\
 \Delta[a_-,b_-] \ar[r] _-{\Delta[a_-,b]}  &  \Delta[a_-,b_+] \ar@{.>} [r] _-{\Delta[a,b_+]} \ar [d] _-{e_2} & \Delta[a_+,b_+] \ar [r] |-{d_2} &  g   \ar [d] ^-{\rotatebox{90}{=}} \\
 & \widetilde{\Delta[a_-,b_+]} \ar [rr] _-{c_2} & & g 
 } 
 \] 
There is a rewriting path $(\Delta[a_-,b_-], \Delta[a_-,b_-]) \dfl (\Delta[a_+,a_+],\Delta[a_+,a_+])$ in $\freetha{(S^\text{db})}$ so that we apply double induction on the $\sigma$-branching modulo $(d_1,d_2)$. As a consequence, there exists a $\sigma$-confluence modulo $(d'_1,e',d'_2)$ of $(d_1,d_2)$. 
Then, we construct a $\sigma$-confluence modulo of $(\Delta[a,b_-], \Delta[a_-,b])$ by successive applications of induction as in the proof of Theorem \ref{T:NewmanLemmaModulo}. This process terminates since $\prec_{\Paux}$ is well-founded.

Let us now consider an overlapping $\sigma$-branching modulo of the form $(a,b)$, where $a$, $b$ are $\sigma$-positive $S$-rewriting steps. By definition, there exists a ground context $\Gamma$ of $\poly{P}_1 \langle Q \rangle$ and a critical $\sigma$-branching modulo $(a',b')$ such that $(a,b) = (\Gamma[a'], \Gamma[b'])$. Following condition $\mathbf{a_0)}$, the critical $\sigma$-branching $(a',b')$ is $\sigma$-confluent modulo, and there exists a $\sigma$-confluence modulo $(a'',e',b'')$ of this $\sigma$-branching. However, the $S$-rewriting paths $\Gamma[a'']$ and $\Gamma[b'']$ that would give a confluence modulo of $(a,b)$ are not necessarily $\sigma$-positive:
\[
\xymatrix@R=2em @C=3.5em{
u  \ar [d] _-{\rotatebox{90}{=}} \ar [r] ^-{a} & \ar@{.>} [r] ^-{\Gamma[a'']} & \ar [d] ^-{\Gamma[e']} \\
u \ar [r] _-{b} & \ar@{.>} [r] _-{\Gamma[b'']} & } \] 
Using positive $\sigma$-confluence of $S$, we are able to construct a $\sigma$-confluence modulo of the $\sigma$-branching modulo $(a,b)$ as in the previous case. 
\end{proof}

\subsubsection{Full positive strategy}
\label{R:NoTermination}
When all rewriting steps are positive, that is when $\sigma(\cl{f})= \pi^{-1} (\cl{f})$ for every $1$-cell $\cl{f}$ in $\cl{\poly{P}\langle Q\rangle}$, we say that $\sigma$ is a \emph{full positive strategy}. In that case, the quasi-termination assumption in Proposition~\ref{P:TerminatingCriticalBranchingTheoremModulo} is not needed to ensure local $\sigma$-confluence modulo from confluence of $\sigma$-critical branchings modulo. Indeed, the confluences represented by dotted arrows in the diagrams above are $\sigma$-positive. Moreover, the positive $\sigma$-confluence is always satisfied, by considering $a' = a$ and $b' = \id_{a_+}$.

\subsection{Algebraic critical branching lemma}

We now prove an algebraic critical branching lemma by quotienting the $S$-rewriting paths of Proposition~\ref{P:TerminatingCriticalBranchingTheoremModulo}.

\subsubsection{Critical branchings of algebraic polygraphs}
Let $\Ar$ be an algebraic rewriting system on $\Pr$. 
The \emph{critical branchings} of $\Ar$ are the projections of the critical $\sigma$-branchings modulo of $\Pr$ of the form $\mathbf{a}_0)$, that is pairs $(\cl{a},\cl{b})$ of $\cl{S}^\sigma$-rewriting steps such that there is a $\sigma$-branching modulo in $\Pr$ with source $(\widetilde{a_-},\widetilde{b_-})$. As a consequence of Proposition~\ref{P:TerminatingCriticalBranchingTheoremModulo}, we deduce the following result.

\begin{theorem}
\label{T:AlgebraicCriticalBranchingTheorem}
Let $\Pr=(\poly{P},Q,R,S)$ be an algebraic polygraph modulo with a positive confluent strategy $\sigma$. If $\PRP$ is quasi-terminating, then an algebraic rewriting system on $\Pr$ is locally confluent if, and only if, its critical branchings are confluent.
\end{theorem}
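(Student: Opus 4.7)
The plan is to derive this result from Proposition~\ref{P:TerminatingCriticalBranchingTheoremModulo} combined with Newman's lemma modulo (Theorem~\ref{T:NewmanLemmaModulo}), transporting the $\sigma$-confluence modulo produced in $\Pr$ along the canonical projection $\pi : P_1\langle Q\rangle \fl \cl{P\langle Q\rangle}$ in order to obtain confluence of the algebraic rewriting system on the quotient. The forward implication is immediate from the definitions preceding the statement: a critical branching of $\Ar$ is, by construction, the $\pi$-image of a critical $\sigma$-branching of $\Pr$ of type $\mathbf{a}_0)$, hence in particular a branching of $\cl{S}^\sigma$, so local confluence of $\Ar$ specializes to confluence of its critical branchings.

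For the converse direction, I would proceed in three steps. First, lift confluence of the critical branchings of $\Ar$ to $\sigma$-confluence modulo of the critical $\sigma$-branchings of $\Pr$ of type $\mathbf{a}_0)$. Given such a critical $\sigma$-branching $(a,b)$ with $\pi$-image $(\cl{a},\cl{b})$, the assumption provides $\cl{S}^\sigma$-paths $\cl{a'}, \cl{b'}$ closing the branching in $\cl{P\langle Q\rangle}$; lifting these along $\pi$ yields $\sigma$-positive $S$-rewriting paths $a', b'$ whose targets are related by a $2$-cell in $\tck{P_2\langle Q\rangle}$, precisely the data of a $\sigma$-confluence modulo. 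Critical $\sigma$-branchings of type $\mathbf{b}_0)$, of the form $(a,e)$ with $e\in \tck{P_2\langle Q\rangle}$, are handled directly from the remark recorded after the definition of $\PRP$: such a branching admits the trivial $\sigma$-confluence modulo $(1_{a_-},\; e^{-}\star_1 a,\; 1_{a_+})$. Hence both conditions $\mathbf{a}_0)$ and $\mathbf{b}_0)$ of Proposition~\ref{P:TerminatingCriticalBranchingTheoremModulo} are verified, and since $\sigma$ is a positive confluent strategy and $\PRP$ is quasi-terminating, the Proposition yields local $\sigma$-confluence modulo of $\Pr$. Applying Theorem~\ref{T:NewmanLemmaModulo} upgrades this to $\sigma$-confluence modulo of $\Pr$.

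The last step is to project back: any branching in $\cl{S}^\sigma$ of source $(\cl{f},\cl{g})$ lifts to a $\sigma$-branching modulo $(a,e,b)$ of $\Pr$ using the positive strategy $\sigma$ to pick representatives; its $\sigma$-confluence modulo descends through $\pi$ to a confluence in $\cl{S}^\sigma$, because every cell in $\tck{P_2\langle Q\rangle}$ becomes an identity in $\cl{P\langle Q\rangle}$. This gives confluence of $\Ar$.

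The main technical obstacle I expect is not any single clever argument but the bookkeeping required for the two-way correspondence between branchings in $\Pr$ and in $\Ar$. In particular, one must check that lifting representatives preserves $\sigma$-positivity, that minimality for the order $\sqsubseteq$ transfers correctly to the quotient so that a critical branching of $\Ar$ is exactly the $\pi$-image of a critical $\sigma$-branching of $\Pr$ of type $\mathbf{a}_0)$ (not type $\mathbf{b}_0)$, which collapses under $\pi$), and that the ground contexts used in the definition of $\sqsubseteq$ behave well with respect to $\pi$. These verifications are routine but are where the interaction between the algebraic axioms $P_2\langle Q\rangle$ and the strategy $\sigma$ really must be unpacked.
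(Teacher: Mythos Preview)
Your overall strategy matches the paper's: derive the result from Proposition~\ref{P:TerminatingCriticalBranchingTheoremModulo} by passing to the quotient. The paper compresses this to a single sentence (``by taking the quotient of the $S$-rewriting paths in Proposition~\ref{P:TerminatingCriticalBranchingTheoremModulo}''), while you spell out the lift--and--project mechanism and insert Theorem~\ref{T:NewmanLemmaModulo} explicitly. That insertion is well motivated: a local branching $(\cl{a},\cl{b})$ in $\Ar$ lifts to a $\sigma$-branching $(a,e,b)$ in $\Pr$ where $e$ may have arbitrary length, hence is not local in the sense of Proposition~\ref{P:TerminatingCriticalBranchingTheoremModulo}, so some form of the Newman argument is needed before projecting. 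You end up proving full confluence of $\Ar$ rather than just local confluence, which is harmless overkill.

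There is, however, one genuine gap. Your handling of condition $\mathbf{b}_0)$ invokes the remark that $(a,e)$-branchings admit the trivial confluence $(1_{a_-},\, e^-\star_1 a,\, 1_{a_+})$, but that remark is stated under the hypothesis $\PR\subseteq S\subseteq\PRP$, whereas the theorem only assumes $R\subseteq S\subseteq\PRP$. When $S=R$, for instance, the composite $e^-\star_1 a$ is a $\PR$-step but not an $S$-rewriting path, so the proposed confluence is not available and Proposition~\ref{P:TerminatingCriticalBranchingTheoremModulo} cannot be invoked as stated. The cleanest repair is to use the observation, recorded just before the definition of $\cl{S}^\sigma$, that $\cl{S}=\cl{R}=\cl{\PRP}$ for every admissible $S$: the algebraic rewriting system $\Ar$ is therefore independent of $S$, and you may assume $S=\PRP$ without loss of generality, at which point the remark applies verbatim. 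Alternatively, one can transport the classification argument of Proposition~\ref{P:TerminatingCriticalBranchingTheoremModulo} directly to the quotient, where $\mathbf{b}_0)$-type branchings collapse to identities and simply do not arise; this is closer to what the paper's one-line proof gestures at.
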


As an immediate consequence, we deduce the following critical branching lemma for algebraic polygraphs modulo.
\begin{corollary}
\label{C:ClassicalCriticalBranchingTheorem}
Let $\Pr$ be an algebraic polygraph modulo with a full positive strategy. Every algebraic rewriting system on $\Pr$ is locally confluent if, and only if, all its critical branchings are confluent.
\end{corollary}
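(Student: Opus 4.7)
The plan is to derive the corollary directly from Theorem~\ref{T:AlgebraicCriticalBranchingTheorem} by observing that, under a full positive strategy, both hypotheses of that theorem become automatic. So the work is essentially bookkeeping, but we should be explicit about why each assumption of Theorem~\ref{T:AlgebraicCriticalBranchingTheorem} can be dropped.

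First, I would recall what the full positive strategy says: $\sigma(\cl{f}) = \pi^{-1}(\cl{f})$ for every $\cl{f}$ in $\cl{P\langle Q\rangle}$. Consequently every $S$-rewriting step is $\sigma$-positive and every rewriting path is $\sigma$-positive. In particular, given any $S$-rewriting step $a$, the choice $\widetilde{a}_- = a_-$, $a' = a$, $e = e'' = 1_{a_-}$ and $b' = 1_{(a_+)}$ exhibits the positive $\sigma$-confluence diagram; this is precisely the observation made in Section~\ref{R:NoTermination}. Hence $\sigma$ is automatically a positive confluent strategy on $\Pr$.

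Next, I would revisit the proof of Proposition~\ref{P:TerminatingCriticalBranchingTheoremModulo} and track where quasi-termination of $\PRP$ enters. Quasi-termination was used only to guarantee well-foundedness of the order $\prec$ on $\sigma$-branchings modulo, which in turn was invoked to promote the natural ``dotted-arrow'' closures $A[a'']$, $A[b'']$ (and their analogues for orthogonal branchings) to $\sigma$-positive reductions by iterating the positive $\sigma$-confluence. Under a full positive strategy, those dotted arrows are already $\sigma$-positive, so no iterated induction is needed: the natural confluence of orthogonal branchings and the $A$-translate of the confluence of a critical branching immediately give confluence of any local branching. Thus Proposition~\ref{P:TerminatingCriticalBranchingTheoremModulo}, and hence Theorem~\ref{T:AlgebraicCriticalBranchingTheorem}, holds without the quasi-termination assumption in this setting.

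Putting these two observations together, I would apply Theorem~\ref{T:AlgebraicCriticalBranchingTheorem} to conclude that an algebraic rewriting system $\Ar$ on $\Pr$ is locally confluent if and only if all its critical branchings are confluent, which is exactly the statement of the corollary. I don't expect any real obstacle; the only subtlety to handle carefully is to make sure the passage from $S$-level critical $\sigma$-branchings modulo (of type $\mathbf{a}_0)$) to their projections to $\cl{S}^\sigma$-rewriting steps is stated cleanly, and to confirm that the branchings of type $\mathbf{b}_0)$ project to trivial diagrams in $\cl{P\langle Q\rangle}$ (since the $2$-cells of $\tck{P_2\langle Q\rangle}$ become identities in the quotient), so they impose no extra confluence condition on the algebraic rewriting system $\Ar$.
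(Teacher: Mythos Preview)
Your proposal is correct and follows essentially the same approach as the paper: the paper treats the corollary as an immediate consequence of Theorem~\ref{T:AlgebraicCriticalBranchingTheorem}, having already recorded in Section~\ref{R:NoTermination} precisely the two observations you spell out (that under a full positive strategy the dotted arrows in the proof of Proposition~\ref{P:TerminatingCriticalBranchingTheoremModulo} are already $\sigma$-positive, so quasi-termination is unnecessary, and that positive $\sigma$-confluence is automatic via $a'=a$, $b'=1$). Your additional remark on the type~$\mathbf{b}_0)$ branchings projecting to identities is a welcome clarification but does not depart from the paper's line of argument.
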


\section{Examples of algebraic rewriting systems}

In this section, we apply the algebraic critical branching lemma to SRS, LRS, and group rewriting systems.

\subsection{String rewriting systems}
\label{SS:CriticalBranchingSRS}

\subsubsection{Critical branching lemma for string rewriting systems}
In \eqref{SSS:SRS} we show how to define a SRS as an algebraic rewriting system over the cartesian polygraph $\poly{Mon}$ given in~\eqref{Ex:TheoryMonoids}. In that case, Theorem~\ref{T:AlgebraicCriticalBranchingTheorem} is the following critical branching lemma for SRS as proved by Nivat, \cite{Nivat73}. 

\begin{theorem}
\label{T:CBSRS}
Let $\Pr$ be an algebraic polygraph modulo on the cartesian polygraph $\poly{Mon}$. Then an algebraic rewriting system on $\Pr$ is locally confluent if and only if its critical branchings are confluent.
\end{theorem}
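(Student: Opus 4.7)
The plan is to derive Theorem~\ref{T:CBSRS} as a direct specialization of Corollary~\ref{C:ClassicalCriticalBranchingTheorem}, by showing that $\poly{Mon}$-based algebraic polygraphs modulo naturally carry a full positive strategy. First, I would fix an algebraic polygraph modulo $\Pr = (\poly{Mon}, Q, R, S)$ and describe the underlying combinatorial data: ground terms of $\poly{Mon}_1\langle Q\rangle$ are parenthesized words over $Q$ (with unit insertions), the cellular extension $\poly{Mon}_2\langle Q\rangle$ identifies any two such terms representing the same element in the free monoid $Q^\ast$, and the quotient category $\cl{\poly{Mon}\langle Q\rangle}$ is precisely $Q^\ast$. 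Under this identification, an algebraic rewriting system $\cl{S}$ on $\Pr$ is exactly a binary relation on $Q^\ast$ in the usual sense, and local confluence in the categorical sense coincides with the classical local confluence of the associated SRS.

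Second, I would select the full positive strategy $\sigma(\cl{f}) = \pi^{-1}(\cl{f})$ for every $\cl{f} \in \cl{\poly{Mon}\langle Q\rangle}$. This corresponds, in the framework of Section~\ref{SS:StrategiesPositivity}, to the decomposition $P'_2 = \emptyset$ and $P''_2 = \poly{Mon}_2$: since the empty polygraph is trivially convergent modulo $\poly{Mon}_2$, every ground term is its own normal form modulo $\poly{Mon}_2$, and the strategy is well-defined. This choice matches the remark following Section~\ref{SS:StrategiesPositivity}, which indicates that for SRS no term is excluded from positivity.

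Third, I would invoke Corollary~\ref{C:ClassicalCriticalBranchingTheorem}, which applies verbatim since $\sigma$ is a full positive strategy on $\Pr$. This yields that $\cl{S}$ is locally confluent if and only if its critical branchings are confluent, completing the proof. The corollary itself already absorbs the key technical point noted in Remark~\ref{R:NoTermination}: under a full positive strategy the ``dotted arrow'' reductions completing orthogonal branchings are automatically $\sigma$-positive, and positive $\sigma$-confluence holds trivially by taking $a' = a$, $b' = 1$, so no quasi-termination hypothesis on $\PRP$ is needed, matching the hypothesis-free statement of Nivat's classical result.

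The main obstacle in this reduction is not analytical but bookkeeping: one must be confident that the notion of ``critical branching of the algebraic rewriting system'' defined in Section~\ref{S:ConfluenceAlgebraicPolygraphsModulo} (projections of critical $\sigma$-branchings modulo with source $(\widetilde{a_-}, \widetilde{b_-})$) coincides, when $P = \poly{Mon}$ and $\sigma$ is full, with the traditional notion of critical pair for SRS, namely two rules applied at overlapping occurrences of the same word in $Q^\ast$. This is checked by unwinding the definition of $\sqsubseteq$-minimal non-trivial non-orthogonal local $\sigma$-branchings in $\Pr$: the ground contexts reduce to left/right multiplications in $Q^\ast$ modulo associativity and unit insertion, so minimality under $\sqsubseteq$ forces the two redexes to overlap in the classical sense, and the result follows.
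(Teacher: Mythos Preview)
Your proposal is correct and follows essentially the same route as the paper: choose the full positive strategy on $\Pr$, observe that this makes positive $\sigma$-confluence trivial and (via Remark~\ref{R:NoTermination}) renders the quasi-termination hypothesis unnecessary, then invoke Corollary~\ref{C:ClassicalCriticalBranchingTheorem}. The paper's own justification is extremely terse and does not spell out the identification of $\cl{\poly{Mon}\langle Q\rangle}$ with $Q^\ast$ or the matching of critical branchings with classical critical pairs, so your added bookkeeping is a welcome elaboration rather than a deviation.
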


In that case, the choice of positive strategy $\sigma$ making all the $2$-cells in $\freetha{S}$ be $\sigma$-positive implies that the positive $\sigma$-confluence is obvious. Moreover the quasi-terminating hypothesis is not required as explained in \eqref{R:NoTermination}.

\subsection{Linear rewriting systems}
\label{SS:CriticalBranchingLinearRewriting}

In this subsection, $\Pr=(\poly{P},Q,R,S)$ denotes an algebraic polygraph modulo, whose cartesian polygraph~$\poly{P}$ has an underlying linear structure, that is, $\poly{P}$ contains the cartesian polygraph $\poly{Mod}^{\textsf{c}}$.
We consider a decomposition of $\poly{P}$ as in~\eqref{SSS:PositiveStragegies},
with $\poly{P}''_2=\AC{+} \sqcup \AC{\cdot}$ and $\poly{P}'_2=\poly{Mod}^{\textsf{c}}$, and the positive strategy $\sigma$ on $\Pr$ of normal forms modulo $\AC{+} \sqcup \AC{\cdot}$ defined in~\eqref{SSS:PositiveStragegies}.

\subsubsection{Critical branching lemma for linear rewriting systems}
The algebraic polygraph $\PRP$ is never terminating. Indeed, because of the linear context, for every $R$-rule $a: f \dfl g$, we have a $\PRP$-rewriting step given by
\begin{eqn}{equation}
\label{E:LinearCycle}
\xymatrix{g \equiv_{\poly{P}} -f + (g+f) \ar@2 [rrr] ^-{-a + (g+f)} & & & -g + (g+f) \equiv_{\poly{P}} f} 
\end{eqn}
However, if the rewriting system  $\cl{S}^\sigma$ is terminating, then $\PRP$ is quasi-terminating, then as a consequence of Theorem~\ref{T:AlgebraicCriticalBranchingTheorem} we have

\begin{theorem}
\label{T:CBLlinear1}
Let $\Pr$ be a terminating algebraic polygraph modulo, whose cartesian polygraph has an underlying linear structure, and with a positive confluent strategy $\sigma$. 
Then an algebraic rewriting system on $\Pr$ is locally confluent if, and only if, its critical branchings are confluent.
\end{theorem}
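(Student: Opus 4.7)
The plan is to derive this theorem as a direct application of Theorem~\ref{T:AlgebraicCriticalBranchingTheorem} to $\Pr$ and~$\sigma$. A positive confluent strategy is provided by hypothesis, so the only ingredient that must still be verified is that $\PRP$ is quasi-terminating; once this is done, the equivalence between local confluence and confluence of critical branchings follows immediately. The entire content of the theorem therefore reduces to the implication: $\cl{S}^\sigma$ terminating $\Rightarrow$ $\PRP$ quasi-terminating.

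To analyze the obstruction to termination of $\PRP$, I would first classify its rewriting steps. A $\PRP$-step decomposes as $e \star a \star e'$ with $a$ an $R$-step and $e, e'$ in $\tck{P_2 \langle Q \rangle}$. When such a step is not $\sigma$-positive, equation~\eqref{E:LinearCycle} exhibits the canonical mechanism: an $R$-rule $\alpha : f \dfl g$ applied in an additive context of the form $-u + (v + \boxx)$ produces a step whose source and target are $P$-equivalent to a common pair, yielding a two-step loop in $\PRP$. The first technical task is to make this precise by showing that every non-$\sigma$-positive $\PRP$-step is of this cancellation form, using the additive inverse $\iota$ present in $\poly{Mod}^c$ and the convergence modulo $\poly{AC}^+ \cup \poly{AC}^\cdot$ of $\poly{Mod}^c$ recalled in Section~\ref{SS:ConvergentPresentationRModModuloAC}.

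Once this classification is available, quasi-termination follows from a two-stage argument. Given an infinite $\PRP$-rewriting sequence $(h_n)_{n \geq 0}$, I would project it through the canonical map $\pi$ to the quotient category $\cl{P\langle Q \rangle}$. Each $\sigma$-positive step in the sequence projects to a nontrivial $\cl{S}^\sigma$-rewriting step; since $\cl{S}^\sigma$ is terminating by hypothesis, only finitely many indices $n$ can correspond to a $\sigma$-positive step. Beyond some rank $N$, every $h_n \dfl h_{n+1}$ is therefore non-$\sigma$-positive and, by the classification, belongs to a cancellation loop as above. A bookkeeping argument using the distance function $d$ introduced in Section~\ref{SSS:QuasiNormalForms} then shows that the tail of the sequence revisits only finitely many $1$-cells, namely those reachable by cancellation loops from the finite set of $\sigma$-positive targets produced earlier, so at least one $1$-cell must appear infinitely often. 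This is exactly quasi-termination of $\PRP$.

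The main obstacle is the classification step of the second paragraph: one must rigorously identify every non-$\sigma$-positive $\PRP$-rewriting step with an additive cancellation of the form~\eqref{E:LinearCycle}, and control how $P_2\langle Q \rangle$-equivalences modulo $\poly{AC}^+ \cup \poly{AC}^\cdot$ interact with the application of an $R$-rule in a linear context. Everything else is a routine specialization of Theorem~\ref{T:AlgebraicCriticalBranchingTheorem} to the linear setting, since the positive confluent strategy built from $\poly{Mod}^c$ in Section~\ref{SS:StrategiesPositivity} is already available.
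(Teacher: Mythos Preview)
Your approach matches the paper's exactly: the paper derives Theorem~\ref{T:CBLlinear1} in one line as a consequence of Theorem~\ref{T:AlgebraicCriticalBranchingTheorem}, after the sentence ``if the rewriting system $\cl{S}^\sigma$ is terminating, then $\PRP$ is quasi-terminating''. The paper does not spell out the argument for that implication at all; your second and third paragraphs are therefore an elaboration beyond what the paper provides, and your identification of the reduction to Theorem~\ref{T:AlgebraicCriticalBranchingTheorem} together with the quasi-termination check is precisely what the paper does.

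One remark on your elaboration: the classification you propose (``every non-$\sigma$-positive $\PRP$-step is of the cancellation form~\eqref{E:LinearCycle}'') is stronger than what you need and likely false as stated, since non-positive steps in a general linear context can be more varied than a single additive cancellation. A cleaner line is the one you sketch in your third paragraph: project an infinite $\PRP$-sequence through $\pi$; the image in $\cl{P\langle Q\rangle}$ is an $\cl{S}$-sequence, and since $\cl{S}=\cl{R}$ and the $\sigma$-positive steps generate a terminating system, the projected sequence stabilises after finitely many nontrivial steps; from that point on, every $\PRP$-step lies over an identity in the quotient, i.e.\ has source and target in the same $P_2\langle Q\rangle$-class, and there are only finitely many such terms reachable within a fixed class under bounded-length $R$-steps. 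This avoids the pointwise classification you flag as the obstacle.
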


Consider an algebraic rewriting system $\cl{S}$ on $\Pr$.
The positivity confluence of $S$ with respect to $\sigma$ implies the factorisation property of \cite[Lemma 3.1.3]{GuiraudHoffbeckMalbos19}, stating that every rewriting step $\cl{a}$ of $\cl{S}$ can be decomposed in the free $(2,1)$-theory on $\cl{S}$ as $\cl{a} = \cl{b} \star \cl{c}^{-1}$, where $\cl{b}$ and $\cl{c}$ are either positive rewriting steps of $\cl{S}^\sigma$ or identities, as in the following diagram:
\begin{eqn}{equation}
\label{E:FactorisationProperty}
\raisebox{0.8cm}{
\xymatrix@R=1.5em@C=2em{
& h & \\
f \ar@2@{.>}@/_1.5ex/ [rr] _-{\cl{a}} \ar@2@/^1.5ex/ [ur] ^-{\cl{b}}  & & g \ar@2@/_1.5ex/ [ul] _-{\cl{c}} 
}}
\end{eqn}
Note that if $\cl{a}$ is a rewriting step of $\cl{S}^\sigma$, this factorisation is trivial. When $\cl{a}$ is in $\cl{S}$ but not in $\cl{S}^\sigma$, that is $\cl{a}$ is a quotient of a non-$\sigma$-positive $S$-rewriting path, it states that $\cl{a}$ can be factorised using positive reductions. 
This proves the following critical branching criterion for linear algebraic rewriting systems.

\begin{theorem}
\label{T:CBLlinear2}
Let $\Pr$ be a terminating algebraic polygraph modulo, whose cartesian polygraph has an underlying linear structure, and satisfying the factorisation property~\eqref{E:FactorisationProperty}. 
Then an algebraic rewriting system on $\Pr$ is locally confluent if, and only if, its critical branchings are confluent.
\end{theorem}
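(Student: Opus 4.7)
The direct implication is immediate since critical branchings are particular local branchings. For the converse, my plan is to use the factorisation property~\eqref{E:FactorisationProperty} to reduce local confluence of the algebraic rewriting system $\cl{S}$ to local confluence of its $\sigma$-positive sub-system $\cl{S}^\sigma$, and then to invoke Theorem~\ref{T:CBLlinear1}, whose hypothesis of positive $\sigma$-confluence is automatic on $\cl{S}^\sigma$.

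First I would take an arbitrary local branching $(\cl{a}, \cl{b})$ of $\cl{S}$ with common source $\cl{f}$ and apply the factorisation property to each step, writing $\cl{a} = \cl{a}_1 \star \cl{a}_2^{-1}$ and $\cl{b} = \cl{b}_1 \star \cl{b}_2^{-1}$ where $\cl{a}_1, \cl{b}_1$ are $\sigma$-positive steps or identities originating at $\cl{f}$, and $\cl{a}_2, \cl{b}_2$ are $\sigma$-positive steps or identities pointing from the targets of $\cl{a}$ and $\cl{b}$ toward common vertices $\cl{h}_a$ and $\cl{h}_b$. A $\sigma$-positive confluence of the positive local branching $(\cl{a}_1, \cl{b}_1)$ by paths $\cl{p}$ and $\cl{q}$ meeting at a common $\cl{c}$ then yields a $\sigma$-positive confluence of $(\cl{a}, \cl{b})$ via the composed paths $\cl{a}_2 \star \cl{p}$ and $\cl{b}_2 \star \cl{q}$, which are genuine forward $\cl{S}^\sigma$-paths because $\cl{a}_2$ and $\cl{b}_2$ terminate at $\cl{h}_a$ and $\cl{h}_b$, the sources of $\cl{p}$ and $\cl{q}$ respectively.

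Second, I would establish local confluence of $\cl{S}^\sigma$ by applying Theorem~\ref{T:CBLlinear1}. The critical branchings of $\cl{S}^\sigma$ coincide with those of the ambient algebraic rewriting system on $\Pr$, so their confluence is part of the hypothesis. Positive $\sigma$-confluence on $\cl{S}^\sigma$ is automatic since every rewriting step is already $\sigma$-positive, so one may take $a' = a$ and $b'$ trivial in the defining diagram. Quasi-termination of $\PRP$ follows from termination of $\cl{S}^\sigma$ together with the analysis of the cycles in~\eqref{E:LinearCycle}: such cycles produce loops modulo $P$ rather than strictly descending chains, so any non-terminating $\PRP$-sequence must eventually repeat a $1$-cell up to $P$-equivalence, which is exactly the quasi-termination property.

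The main obstacle I expect is justifying the gluing step cleanly: the decomposition $\cl{a} = \cl{a}_1 \star \cl{a}_2^{-1}$ lives a priori in the free $(2,1)$-theory on $\cl{S}$, and one must check that reversing $\cl{a}_2$ is compatible with reorienting along the positive confluence, and then trace the whole construction back through the projection $\pi \colon P_1 \langle Q \rangle \fl \cl{P \langle Q \rangle}$ to confirm that $(\cl{a}, \cl{b})$ is genuinely confluent in $\cl{S}$ rather than merely in its $(2,1)$-completion. Once this bookkeeping is in place, the proof closes by direct appeal to Theorem~\ref{T:CBLlinear1}.
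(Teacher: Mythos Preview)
Your Step~1 is sound: factorising each leg of a local $\cl{S}$-branching as $\cl{a}=\cl{a}_1\star\cl{a}_2^{-1}$ does reduce confluence of $(\cl{a},\cl{b})$ to confluence of the $\sigma$-positive branching $(\cl{a}_1,\cl{b}_1)$, with $\cl{a}_2,\cl{b}_2$ providing the remaining legs.

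The gap is in Step~2. You cannot invoke Theorem~\ref{T:CBLlinear1} by ``restricting to $\cl{S}^\sigma$'' and declaring positive $\sigma$-confluence automatic. Positive $\sigma$-confluence is a hypothesis on the algebraic polygraph modulo $\Pr=(P,Q,R,S)$ and the strategy $\sigma$, not on the quotient rewriting system you choose to look at; restricting attention to $\sigma$-positive steps does not turn $\sigma$ into a full positive strategy in the sense of~\ref{R:NoTermination}. The paper in fact says explicitly, just after Theorem~\ref{T:CBLlinear2}, that whether the factorisation property implies positive $\sigma$-confluence is an \emph{open question}, so you cannot close the loop this way. Moreover, even if you formally replaced $S$ by its $\sigma$-positive part, the proof of Proposition~\ref{P:TerminatingCriticalBranchingTheoremModulo} (on which Theorem~\ref{T:CBLlinear1} rests) still produces the natural confluences $B[a_+,b]$ and $B[a,b_+]$ of orthogonal branchings, and these need not be $\sigma$-positive; some replacement tool is required exactly there.

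What the paper intends---though it leaves the argument implicit---is not a black-box reduction to Theorem~\ref{T:CBLlinear1} but a rerun of the proof of Proposition~\ref{P:TerminatingCriticalBranchingTheoremModulo} (hence of Theorem~\ref{T:AlgebraicCriticalBranchingTheorem}) directly at the quotient level, substituting the factorisation property~\eqref{E:FactorisationProperty} for positive $\sigma$-confluence at each point where a non-$\sigma$-positive step appears. Concretely, whenever a dotted arrow such as $B[a_+,b]$ fails to be $\sigma$-positive, factorise it as $\cl{c}\star\cl{d}^{-1}$ with $\cl{c},\cl{d}$ $\sigma$-positive, and continue the well-founded induction on the distance $d$ as in the original proof; quasi-termination of $\PRP$, which you correctly derive from termination of $\cl{S}^\sigma$ and the cycles~\eqref{E:LinearCycle}, ensures this induction terminates. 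Your Step~1 is essentially the first instance of this substitution, but the same move has to be threaded through the entire inductive argument rather than delegated to Theorem~\ref{T:CBLlinear1}.
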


\subsubsection{Left-monomial rewriting systems}
The rules of an algebraic rewriting system on $\Pr$ transform linear combinations of terms into linear combinations of terms. The system is called \emph{left-monomial} when the source of every rule is an element of $\poly{P}_1\langle Q \rangle$ that does not contain neither the operation $\oplus : \ob{mm} \fl \ob{m}$ nor $\eta : \ob{rm} \fl \ob{m}$ defined in~\eqref{SS:ConvergentPresentationRModModuloAC}. Equivalently, the source of any rule of the algebraic rewriting system is a \emph{monomial}.

For terminating left-monomial LRS, the local confluence is equivalent to the confluence of critical branchings, \cite[Thm. 4.3.2]{GuiraudHoffbeckMalbos19}. The proof of this criterion requires the factorisation property~\eqref{E:FactorisationProperty} that always holds in this context. 
We expect that in the left-monomial linear setting the positive confluence is equivalent to this property. But this remains an open problem, whose answer would explain the criterion for local confluence of LRS as a rewriting modulo result.

\subsection{Rewriting with inverses}
\label{SS:RewritingInverse}

We conclude these algebraic examples by presenting a notion of group rewriting system defined as an algebraic rewriting system.

\subsubsection{Rewriting in groups}
In group theory rewriting gives algorithmic methods for decision problems, such as the word/conjugacy/geodesic problems, \cite{LeChenadec84,LeChenadec86, DiekertDuncanMyasnikov10, DiekertDuncanMyasnikov12, Chouraqui09, Chouraqui11}. In most cases, the method consists in constructing a convergent presentation of the considered group. Note also that homological finiteness conditions for finite convergence of groups were introduced, \cite{CremannsOtto96}. Finally, algorithms to compute relations among relations (syzygies) for groups given by generators and relations were developed in \cite{HeyworthWendsley03}.
However, in all these works the presentations of the groups are interpreted by SRS, or by Gröbner bases, that present groups, or group rings, as monoids, or monoid rings, with axioms of inverses given explicitly in the set of rules. Namely, for a group $G$ presented by a set of generators $X$ and a set of relations $\Rr$, it is associated the following SRS:
\[
\langle \, Q \;|\; \, \eta_x : xx^- \fl 1,\; \eta^-_x : x^-x\fl 1, \rho_r : r \fl 1, \;\text{for $r\in \Rr$} \rangle.
\]

When solving decision problems, or computing homological invariants for groups, the rules $\eta_x$ and~$\eta^-_x$ make the problem more complicated uselessly. Indeed, these rules should not be considered as those defining the group. In this way, the notion of rewriting in groups is not algebraically well considered yet.

\subsubsection{Group rewriting systems}
Consider an algebraic polygraph modulo $\Pr=(\poly{P},Q,R,\PRP)$, where $\poly{P}=\widetilde{\poly{Grp}}$. 
The generating $1$-cells of $\poly{P}$ induce on $\cl{\poly{P}\langle Q \rangle}$ a structure of group isomorphic to the free group $F(Q)$ on $Q$.
Denote by $\poly{P}_1 \langle Q \rangle_{\text{red}}$ the set of reduced $1$-cells of $\poly{P}_1 \langle Q \rangle$ with respect to $\poly{P}_2 \langle Q \rangle$. 
A cellular extension $T$ of $\poly{P}_1\langle Q \rangle$ is called \emph{reduced} if, for every $A$ in $T$, the ground terms $A_-$ and $A_+$ belong to $\poly{P}_1 \langle Q \rangle_{red}$.

\begin{lemma}
There exists a unique reduced cellular extension $R_{red}$ of the theory $\poly{P}_1\langle Q \rangle$ such that the algebraic rewriting systems $\cl{R}$ and $\cl{R}_{red}$ on $\cl{\poly{P}\langle Q \rangle}$ coincide.
\end{lemma}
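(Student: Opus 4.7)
The plan is to exploit that $P=\widetilde{\poly{Grp}}$ is convergent modulo $\poly{Ass}$ in order to associate to every ground term a reduced normal form, and then to define $R_{\text{red}}$ by normalising the sources and targets of the rules of $R$. The uniqueness statement should then follow from the uniqueness of reduced representatives in each class of $\cl{P\langle Q\rangle}$.

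First, I would invoke the fact, recorded in Section~\ref{S:AlgebraicPolygraphs}, that the algebraic polygraph of axioms $(P,Q,P_2\langle Q\rangle)$ inherits the rewriting properties of $P$. In particular, since $\widetilde{\poly{Grp}}$ is convergent modulo $\poly{Ass}$, every ground term $u$ in $P_1\langle Q\rangle$ reduces via $P_2\langle Q\rangle$ to a term of $P_1\langle Q\rangle_{\text{red}}$ that is unique up to the associativity $2$-cells of $\poly{Ass}\langle Q\rangle$. Using the canonical associativity representative implicit in the definition of $P_1\langle Q\rangle_{\text{red}}$ (for instance, left-bracketed reduced words, which is the standard normal form for elements of the free group $F(Q) \cong \cl{P\langle Q\rangle}$), this produces a well-defined normalisation map
\[
\widetilde{(-)} \: : \: P_1\langle Q\rangle \fl P_1\langle Q\rangle_{\text{red}}
\]
which is the identity on $P_1\langle Q\rangle_{\text{red}}$ and satisfies $u\equiv_{P_2}\widetilde{u}$ for every $u$.

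Next, I would define $R_{\text{red}}$ by taking, for each rule $\alpha : u \dfl v$ of $R$, the rule $\widetilde{\alpha} : \widetilde{u} \dfl \widetilde{v}$. By construction $R_{\text{red}}$ is a reduced cellular extension. To check that $\cl{R}=\cl{R_{\text{red}}}$ as cellular extensions of $\cl{P\langle Q\rangle}$, it suffices to note that $\widetilde{u}$ and $u$ (resp.\ $\widetilde{v}$ and $v$) define the same element of $\cl{P\langle Q\rangle}$, so $\cl{\alpha}$ and $\cl{\widetilde{\alpha}}$ coincide as $2$-cells of the quotient; ranging over all $\alpha\in R$ gives the required equality of algebraic rewriting systems.

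For uniqueness, let $R'$ be another reduced cellular extension with $\cl{R'}=\cl{R}$. Each $\alpha'\in R'$ projects to some $\cl{\alpha}\in\cl{R}$ coming from a rule $\alpha\in R$; since $R'$ is reduced, $s(\alpha')$ and $t(\alpha')$ already lie in $P_1\langle Q\rangle_{\text{red}}$, and the equality $\cl{\alpha'}=\cl{\alpha}$ forces $s(\alpha')\equiv_{P_2} s(\alpha)$ and $t(\alpha')\equiv_{P_2} t(\alpha)$. The uniqueness of reduced normal forms then pins down $s(\alpha')=\widetilde{s(\alpha)}$ and $t(\alpha')=\widetilde{t(\alpha)}$, whence $R'=R_{\text{red}}$. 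The main subtle point is ensuring that the map $\widetilde{(-)}$ is genuinely single-valued despite the residual associativity ambiguity inside each class of reduced terms; this is handled once and for all by fixing the canonical associativity form built into $P_1\langle Q\rangle_{\text{red}}$, after which both existence and uniqueness follow immediately.
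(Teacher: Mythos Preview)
Your proposal is correct and follows exactly the same idea as the paper: define $R_{\text{red}}$ by replacing the source and target of each rule of $R$ by their $P_2\langle Q\rangle$-normal forms. The paper's own proof is a single sentence stating precisely this construction, without spelling out either the verification that $\cl{R}=\cl{R_{\text{red}}}$ or the uniqueness argument; your treatment of these points (and your care about the residual associativity ambiguity) is more detailed than what the paper provides, but it is the same approach.
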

\begin{proof}
The $2$-cells of $R_{red}$ are obtained by reducing the sources and targets of $2$-cells of~$R$ with respect to $\poly{P}_2\langle Q \rangle$.
\end{proof}

From now on, we assume that the cellular extension $R$ is reduced.

\subsubsection{Positive strategies for reductions in groups}
\label{SSS:PositiveStrategiesGroups}
The free group $\cl{\poly{P}\langle Q \rangle}$ can be constructed as a quotient monoid. Indeed, consider the free monoid $(Q \sqcup Q^-)^\ast$ over the set $Q \sqcup Q^-$ of constants and their formal inverses, with $Q^-=\{ x^- \: \mid \: x \in Q \}$. Then, the group $\cl{\poly{P}\langle Q \rangle}$ is isomorphic, as a monoid, to the monoid generated by $Q \sqcup Q^-$ and submitted to the relations 
\begin{eqn}{equation}
\label{E:InverseRelations}
x x^- \to 1, \quad\text{and}\quad  x^- x \to 1, \quad\text{for every $x \in Q$}.
\end{eqn}
The relations~\eqref{E:InverseRelations} are convergent, and thus the elements of the group $\cl{\poly{P}\langle Q \rangle}$ are identified with normal forms of elements of $(Q \sqcup Q^-)^\ast$ with respect to these relations 

Let us fix a total order $\prec$ over $Q \sqcup Q^-$ such that for all $x,y\in Q$, $x\prec y$ implies $x^-\prec y^-$. Denote by $\precdeg$ the deglex order on the free monoid $(Q \sqcup Q^-)^\ast$ induced by the order $\prec$, that is for any $f, g \in (Q \sqcup Q^-)^\ast$, $f \precdeg g$ if $f$ is shorter than $g$ or they have the same length and $f$ is smaller than $g$ for the lexicographic order induced by $\prec$.

Every $1$-cell in $\poly{P}_1\langle Q \rangle$ can be written $f(\iota^{n_1}(x_1),\ldots,\iota^{n_k}(x_k))$, where $n_1, \dots, n_k \in \N$, $f$ is an element of $\freetha{\poly{P}}_1$, $x_1, \dots, x_k$ are constants of $Q$, $\iota$ is the inverse operation defined in \eqref{SSS:PresentationOfGroups}, and $\iota^0$ denotes the identity $1$-cell of the theory $\freetha{\poly{P}}_1$. Moreover, if each $n_i$ is chosen to be maximal, then $f$ is uniquely determined, and does not contain the operation $\iota$ in its leafs.
We define a map 
\[
\llbracket \; \rrbracket : \poly{P}_1 \langle Q \rangle \to (Q \sqcup Q^-)^\ast,
\]
that associates to every $1$-cell $f(\iota^{n_1}(x_1),\ldots,\iota^{n_k}(x_k))$ in $\poly{P}_1 \langle Q \rangle$, where the $n_i$'s are maximal as above, the word $x_1^{\varepsilon_1}\ldots x_k^{\varepsilon_k}$, where $\varepsilon_i=+$ if $n_i$ is even, and $\varepsilon_i=-$ if $n_i$ is odd.

Let us denote by $red(f)$ the normal form in $(Q \sqcup Q^-)^\ast$ of $\llbracket f \rrbracket$ with respect to relations \eqref{E:InverseRelations}. Let $\models$ be the order on $\poly{P}_1\langle Q\rangle$ defined by $f \models g$ if $red(f) \precdeg red(g)$.

We define a positive strategy for $\Pr$, by setting, for every $\cl{h} \in \cl{\poly{P}\langle Q \rangle}$, the set $\sigma(\cl{h})$ to be the subset of $\pi^{-1}(\cl{h})$ whose elements are of the form
$\mu(\mu(f,r_1^\varepsilon), g)$ and $\mu(f,\mu(r_1^\varepsilon, g))$, where
$f,g\in \poly{P}_1\langle Q\rangle_{\text{red}}$, $r_1 \to r_2 \in R$, $\varepsilon \in \{-,+ \}$, and such that
\[
\mu(\mu(f,r_2^\varepsilon), g) \models \mu(\mu(f,r_1^\varepsilon), g),
\]
where, for $i=1,2$, we let $r_i^\varepsilon:=r_i$ if $\varepsilon=+$, and $r_i^\varepsilon:=\iota(r_i)$ otherwise.

\begin{proposition}
For the positive strategy $\sigma$ defined above, the algebraic polygraph modulo  \linebreak $\Pr=(P,Q,R,\PRP)$ is positively $\sigma$-confluent. 
\end{proposition}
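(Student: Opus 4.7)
The plan is to show that any $R$-rewriting step $a = A[\alpha] : A[r_1] \dfl A[r_2]$, with $\alpha : r_1 \dfl r_2 \in R$ and $A$ a ground context of $P_1\langle Q\rangle$, can be simulated modulo the group axioms by a length-one $\sigma$-positive step applying the same rule $\alpha$ in exposed form. Once this is done, taking $e' := 1$ and $b' := 1$ closes the required positive $\sigma$-confluence diagram, with the axiom $2$-cells $e$ and $e''$ providing the left and right vertical faces.

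First I would use the fact that the convergent presentation $\widetilde{\poly{Grp}}$ of the algebraic theory of groups, extended to $\tck{P_2\langle Q \rangle}$, generates all group-theoretic equalities among ground terms. This lets us freely rearrange $A[r_1]$ into the form $\widetilde{a}_- := \mu(\mu(u_0, r_1^\varepsilon), v_0)$, for some sign $\varepsilon \in \{+1,-1\}$ and some reduced ground terms $u_0, v_0 \in P_1\langle Q\rangle_{\text{red}}$, via a suitable ground $2$-cell $e \in \tck{P_2\langle Q \rangle}$. Applying the same rearrangement to $A[r_2]$ with the same $u_0, v_0, \varepsilon$ produces a $2$-cell $e'' \in \tck{P_2\langle Q \rangle}$ from $\mu(\mu(u_0, r_2^\varepsilon), v_0)$ to $A[r_2]$. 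The sign $\varepsilon$ is naturally dictated by the parity of the number of occurrences of $\iota$ above the hole of $A$, but it can also be flipped by inserting pairs of the form $\mu(x, \iota(x)) = e$ via the group axioms, at the cost of modifying $u_0$ and $v_0$; we therefore retain genuine flexibility in the choice of decomposition.

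Second, I would verify that $\widetilde{a}_-$ lies in $\sigma(\cl{a_-})$. By construction it has the required shape and its image under $\pi$ is $\cl{a_-}$, so the only remaining obligation is the decreasing condition $\mu(\mu(u_0, r_2^\varepsilon), v_0) \models \mu(\mu(u_0, r_1^\varepsilon), v_0)$. By choosing $u_0$ and $v_0$ so that no cancellations occur between them and $r_i^\varepsilon$ in $(Q \sqcup Q^{-})^\ast$, which can always be arranged using the freedom above, the comparison in $\precdeg$ reduces to the intrinsic comparison of $red(r_1^\varepsilon)$ and $red(r_2^\varepsilon)$. Using that $R$ is reduced, so that $r_1 \neq r_2$ as reduced words, together with the compatibility $x \prec y \Rightarrow x^- \prec y^-$ of the chosen order, one picks the sign $\varepsilon$ so that $red(r_2^\varepsilon) \precdeg red(r_1^\varepsilon)$, and reconciles it with the parity dictated by $A$ through the sign-flip device of the first step. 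Setting $a' := \mu(\mu(u_0, \alpha^\varepsilon), v_0)$ then yields the desired $\sigma$-positive $R$-rewriting step of length one from $\widetilde{a}_-$ to $\mu(\mu(u_0, r_2^\varepsilon), v_0)$, and the whole diagram commutes.

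The main obstacle lies in the second step: controlling the $\precdeg$-comparison after the decomposition is delicate because, while the length component of $\precdeg$ is preserved by the involution $r \mapsto r^{-1}$, the lex component is not. One must therefore choose the decomposition $(u_0, v_0, \varepsilon)$ carefully, guided by the compatibility of $\prec$ with inversion, to ensure that the exposed redex is oriented in the strictly decreasing direction. This is exactly where the specific design of the positive strategy $\sigma$ in Section~\ref{SSS:PositiveStrategiesGroups} becomes essential.
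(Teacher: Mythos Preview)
Your plan commits to $e'=1$ and $b'=1$, so the single $\sigma$-positive step $a'$ must go from a representative of $\cl{a_-}$ to a representative of $\cl{a_+}$. But by the very definition of $\sigma$, every $\sigma$-positive step is strictly $\models$-decreasing: its target has strictly smaller $red(\cdot)$ in $\precdeg$ than its source. Hence your scheme can only succeed when $red(a_+)\precdeg red(a_-)$. Nothing in the hypotheses guarantees this; for instance with $R=\{s\dfl t\}$ and $s\prec t$ the step $a:s\dfl t$ increases $red$, and one checks directly that no $\sigma$-positive step exists from any representative of $\cl{s}$ to any representative of~$\cl{t}$.

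The ``sign-flip'' device does not rescue this. Requiring simultaneously $u_0\,r_1^{\varepsilon}\,v_0\equiv A[r_1]$ and $u_0\,r_2^{\varepsilon}\,v_0\equiv A[r_2]$ with the \emph{same} $u_0,v_0$ forces $r_2^{\varepsilon}r_1^{-\varepsilon}$ to be conjugate to $A[r_2]A[r_1]^{-1}$ in the free group; when $A$ has even $\iota$-parity this element is conjugate to $r_2 r_1^{-1}$, and choosing $\varepsilon=-1$ would force $r_2^{-1}r_1$ to be conjugate to $r_2 r_1^{-1}$, i.e.\ $r_1r_2^{-1}$ conjugate to its own inverse, impossible in a free group unless $r_1=r_2$. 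So $\varepsilon$ is pinned by the parity of $A$ and cannot be chosen to reverse the order. Even setting that aside, the compatibility $x\prec y\Rightarrow x^{-}\prec y^{-}$ means that when $r_1\precdeg r_2$ one also has $r_1^{-1}\precdeg r_2^{-1}$ (at least in the length-$1$ case above), so no sign makes $r_2^{\varepsilon}\precdeg r_1^{\varepsilon}$.

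The paper's proof addresses exactly this asymmetry by \emph{not} forcing $b'=1$. It first works with an auxiliary strategy $\sigma'$ allowing an arbitrary context, and splits into two cases: if $C[r_2]\models C[r_1]$ then $a$ itself is $\sigma'$-positive and one takes $a'=a$, $b'=1$; otherwise one takes $a'=1$ and builds a $\sigma'$-positive $b'$ by rewriting the copy of $r_1$ hidden under a $\iota$ in $C[\mu(\mu(r_2,\iota(r_1)),r_1)]$, which represents $\cl{a_+}$ and whose image represents $\cl{a_-}$. Only afterwards does one normalise the context to land in $\sigma$ itself. The missing idea in your proposal is precisely this use of $b'$ to absorb the ``wrong-direction'' case.
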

\begin{proof}
Let us introduce an auxiliary strategy $\sigma'$ for $\Pr$ by setting
  \begin{eqn}{equation}
    \label{E:InterPositiveStrategyGroups}
    \sigma'(\cl{h})=\left\{\Gamma[r_1]\in\pi^{-1}(\cl{h})\mid
    \text{$\Gamma$ is a context of $\poly{P}_1 \langle Q \rangle$}, \: r_1 \to r_2 \in R,  \: \text{s.t.} \:
    \Gamma[r_2] \models \Gamma[r_1] \right\},
  \end{eqn}
for every $\cl{h} \in \cl{\poly{P}\langle Q \rangle}$.
Prove that $\Pr$ is positively $\sigma'$-confluent.
For all rule $r_1\fl r_2$ in $R$ and ground context $\Gamma$ of $\poly{P}_1\langle Q \rangle$ such that
$\Gamma[r_2]\models \Gamma[r_1]$, the $\PRP$-rewriting step $\Gamma[r_1]\fl \Gamma[r_2]$ is $\sigma'$-positive.  Otherwise
$\Gamma[r_1] \models \Gamma[r_2]$, then the $\PRP$-rewriting step
  $\Gamma'[r_1] \to \Gamma'[r_2]$ is $\sigma'$-positive, where $\Gamma'[\square]=\Gamma[\mu(\mu(r_2,\iota(\square)),r_1)]$. Indeed, we have $\text{red}(\Gamma'(r_2))=\text{red}(\Gamma[r_1])  \precdeg \text{red}(\Gamma[r_2]) = \text{red}(\Gamma'(r_1))$. Moreover, $\Gamma[\mu(\mu(r_2,r_1^{-}),r_1)]$ and $\Gamma[\mu(\mu(r_2,r_2^{-}),r_1)]$ are equivalent with respect to $\equiv_{\poly{P}_2 \langle Q \rangle}$ to
  $\Gamma[r_2]$ and $\Gamma[r_1]$, respectively. Now, we show that every $\sigma'$-positive $\PRP$-rewriting step induces a $\sigma$-positive one.

Let us consider a $\sigma'$-positive $\PRP$-rewriting step $\Gamma[r]: \Gamma[r_1] \to \Gamma[r_2]$, 
let $n$ be the largest integer such that $\Gamma[r_1] = \Gamma_1 [\iota^n(r_1)]$ and $\Gamma_1$ is a (possibly empty) context. Denote by $\varepsilon := +$ if $n$ is even and $-$ if $n$ is odd, then $\iota^n(r_1)$ is equivalent to $r_1^\varepsilon$  modulo $\equiv_{\poly{P}_2\langle Q\rangle}$.

If $\Gamma_1$ is empty, then the $\PRP$-rewriting step is of the form $r_1^{\varepsilon} \to r_2^{\varepsilon}$. Since $\Gamma[r_2] \models \Gamma[r_1]$, then $r_2^{\varepsilon} \models r_1^{\varepsilon}$ and thus it is $\sigma$-positive.

Otherwise, $\Gamma_1 [r_1^\varepsilon]$ may be written either as $\mu(\mu(f', r_1^\varepsilon), g')$
or $\mu(f', \mu(r_1^\varepsilon, g'))$, where $f',g'$ are $1$-cells in $\poly{P}_1\langle Q\rangle$. Denote by $f:= \widehat{f'}$ and $g := \widehat{g'}$ be the normal forms of $f'$ and $g'$ with respect to $\poly{P}_2 \langle Q \rangle$.
Then $\Gamma_1 [r_1^\varepsilon]$ is equivalent modulo $\equiv_{\poly{P}_2 \langle Q \rangle}$ to
$\mu(\mu(f, r_1^\varepsilon), g)$ or $\mu(f, \mu(r_1^\varepsilon, g))$. Moreover, since $ red(f r_2^\varepsilon g) = red (\Gamma[r_2]) \precdeg red(\Gamma[r_1]) = red(f r_1^\varepsilon g)$,
the $\PRP$-rewriting step $f r_1^\varepsilon g \to f r_2^\varepsilon g$ is $\sigma$-positive, where $f r_i^\varepsilon g$ denotes either
$\mu(\mu(f, r_i^\varepsilon), g)$ or $\mu(f, \mu(r_i^\varepsilon, g))$.
\end{proof}

\subsubsection{Example}
Let us consider the algebraic polygraph modulo $(\poly{P},Q,R, \PRP)$, where $\poly{P}=\widetilde{\poly{Grp}}$, $Q = \{ s, \: t \}$ and $R = \{ \mu ( \mu(s,t),s) \dfl \mu (t, \mu(s,t))\}$. 
We consider the deglex order induced by the ordering $s>t>s^->t^-$.
The positive $\PRP$-rewriting steps are of the form 
\[ 
f \mu ( \mu(s,t),s) g \dfl f \mu (t, \mu(s,t)) g \quad \text{or} \quad f \mu( \mu(s^-,t^-),s^-) g \dfl f \mu (t^-, \mu(s^-,t^-)) g,
\]
where $f,g$ are reduced elements of $\poly{P}_1 \langle Q \rangle_{\text{red}}$, and the orientation is compatible with the order $\models$ as defined in~\eqref{SSS:PositiveStrategiesGroups}. For instance, there is a positive $\PRP$-rewriting step
\[ \mu ( \mu (\mu(s,t),s),t) \dfl \mu ( \mu (t, \mu(s,t)), t) \]
yielding a reduction $stst \dfl tstt$ in the free group $F(Q)$.

Now suppose that $f= tst$, $g = st$ and $\varepsilon = -1$. 
There is a $\sigma$-positive $\PRP$-reduction as follows:
\[ tst \mu(\mu(s^-,t^-),s^-) st \equiv_{\poly{P}} tst \mu(\mu(s^-,t^-),s^-) sts s^- \dfl tst \mu(t^-, \mu(s^-,t^-)) stss^- \equiv_{\poly{P}} \mu(s,t) \]
that gives a rewriting step $tsts^- \dfl st$ in the quotient.
There is a critical branching of $\PRP$ as follows:
\[ 
\xymatrix@R=1.5em@C=2em{
\mu(\mu(\mu(s,t),s), \mu(t,s)) \ar [r] \ar [d] & \mu(\mu(t,\mu(s,t)), \mu(t,s)) 
\\
\mu (\mu(s,t), \mu(\mu(s,t),s)) \ar [r] & \mu (\mu(s,t), \mu(t,\mu(s,t))) 
}
\]
that is not confluent modulo. It induces the following non confluent algebraic critical branching in the free group $F(Q)$
\[
\xymatrix@R=0.4em @C=3em{
& tstts \\
ststs \ar@/^2ex/ [ur]
\ar@/_2ex/ [dr] & \\
& sttst } 
\]

\section{Conclusion and perspectives}

In this article, we introduced the notion of algebraic rewriting systems as rewriting systems over algebraic theories. We studied algebraic contexts such as string, linear, and group rewriting. We formulated sufficient conditions to prove the critical branching lemma for algebraic rewriting systems. Our results lead us to formulate several perspectives:
\begin{enumerate}[$\bullet$]
\item In Section~\ref{SS:CriticalBranchingSRS}, we recovered the critical branching lemma for SRS with respect to a convergent presentation of the theory $\poly{Mon}$ and a positive strategy making all the reductions positive. This corresponds to the classical setting of SRS. One may wonder what happens if we consider another presentation of the theory $\poly{Mon}$ and another positive strategy. These choices define a paradigm of string rewriting. This raises the question of defining a notion of equivalence between paradigms of string rewriting.
\item For left-monomial LRS and Gröbner bases the critical branching lemma only requires termination. Theorem~\ref{T:CBLlinear2} proves that the factorisation property is also required. This property is always satisfied when we rewrite in left-monomial linear structures such as commutative or associative algebras. We expect that for left-monomial LRS, the factorisation property is equivalent to the positive confluence, and is always satisfied.
\item In Section~\ref{SS:RewritingInverse}, we defined a positive strategy to rewrite in a free group. We prove a critical branching lemma with respect to this strategy. However, we do not yet know an algorithm that computes the exhaustive list of critical branchings with respect to this strategy. The same algorithmic problem occurs for the computation of the critical branchings for LRS that are not left-monomial.
\item Another issue is to extend the algebraic critical branching lemma to higher-structures such as linear operads. Rewriting was defined on linear operads in terms of shuffle Gröbner bases by Dotsenko and Khoroshkin in  \cite{DotsenkoKhoroshkin10} and shuffle linear polygraphs by Malbos and Ren in \cite{MalbosRen20}. 
Algebraic polygraphs introduced in this article describe rewriting in one-dimensional algebraic structures, such as monoids, groups, modules, and algebras. We expect that our constructions can be extended to the setting of linear operads by considering algebraic polygraphs defined over a structure of cartesian $2$-polygraphs on shuffle trees.
\item Finally, another outlook is to extend the algebraic critical branching lemma to conditional rewriting systems in order to formalise the critical branching lemma for LRS defined over a field. The conditional rules are used to specify the rules depending on the invertibility of scalars in the field.
\end{enumerate}

\begin{small}
  \renewcommand{\refname}{\Large\textsc{References}}
  \bibliographystyle{plain}
\bibliography{biblioCURRENT}
\end{small}

\quad

\vfill

\begin{footnotesize}
\bigskip
\auteur{Cyrille Chenavier$^1$}{cyrille.chenavier@jku.at}
{Johannes Kepler University\\
Altenberger Straße 69\\
A-4040 Linz, Austria}

\bigskip
\auteur{Benjamin Dupont}{bdupont@math.univ-lyon1.fr}
{Univ Lyon, Universit\'e Claude Bernard Lyon 1\\
CNRS UMR 5208, Institut Camille Jordan\\
43 blvd. du 11 novembre 1918\\
F-69622 Villeurbanne cedex, France}

\bigskip
\auteur{Philippe Malbos}{malbos@math.univ-lyon1.fr}
{Univ Lyon, Universit\'e Claude Bernard Lyon 1\\
CNRS UMR 5208, Institut Camille Jordan\\
43 blvd. du 11 novembre 1918\\
F-69622 Villeurbanne cedex, France}
\end{footnotesize}

\vspace{1cm}

\begin{small}
--------------

$^1$ The author was supported by the Austrian  Science  Fund  (FWF):  P 32301. 

\vspace{0.6cm}

---\;\;\today\;\;-\;\;\hhmm\;\;---
\end{small}
\end{document}